\numberwithin{equation}{section}
\setlist[enumerate]{itemsep = -0.2em}
\titleformat{\subsection}[runin]
  {\normalfont\normalsize\bfseries}{\thesubsection}{0.3em}{#1.}
\definecolor{MyDarkBlue}{rgb}{0,0.08,0.50}  
\definecolor{BrickRed}{rgb}{0.65,0.08,0}
\newtheorem{Lemma}{Lemma}[section]
\newtheorem{Proposition}[Lemma]{Proposition}
\newtheorem{Theorem}[Lemma]{Theorem}
\newtheorem{Remark}[Lemma]{Remark}
\newtheorem{Definition}[Lemma]{Definition}
\newtheorem{Condition}[Lemma]{Condition}
\newtheorem{remark}[Lemma]{Remark}
\newcommand{\sub}[1]{\boldsymbol{#1}}
\newcommand{\R}{\mathbb{R}}
\newcommand{\N}{\mathbb{N}}
\newcommand{\I}{\mathbbm{1}}
\newcommand{\pr}{\mathbb{P}}
\newcommand{\E}{\mathbb{E}}
\newcommand{\G}{\mathcal{G}}
\newcommand{\D}{\mathcal{D}}
\newcommand{\U}{\mathcal{U}}
\renewcommand{\P}{\mathcal{P}}
\newcommand{\emp}{\varnothing}
\newcommand{\eqn}[1]{\begin{equation} #1 \end{equation}}
\newcommand{\T}{\mathcal{T}}
\newcommand{\Id}{\mathrm{Id}}
\newcommand{\PA}{\mathrm{PA}}
\newcommand{\sss}{\scriptscriptstyle}
\newcommand{\din}{d^{\sss(\mathrm{in})}}
\newcommand{\dout}{d^{\sss(\mathrm{out})}}
\newcommand{\mout}{m^{\sss(\mathrm{out})}}
\newcommand{\Din}{D^{\scriptscriptstyle(\mathrm{in})}}
\newcommand{\Dout}{D^{\scriptscriptstyle(\mathrm{out})}}
\newcommand{\subDout}{\sub{D}^{\sss(\mathrm{out})}}
\newcommand{\subDin}{\sub{D}^{\sss(\mathrm{in})}}
\newcommand{\DDin}{\mathcal{D}^{\scriptscriptstyle(\mathrm{in})}}
\newcommand{\DDout}{\mathcal{D}^{\scriptscriptstyle(\mathrm{out})}}
\newcommand{\Win}{W^{\sss(\mathrm{in})}}
\newcommand{\Wout}{W^{\sss(\mathrm{out})}}
\begin{document}

\title{\bfseries\uppercase{\large Local weak convergence for PageRank} }

\author[a,1]{Alessandro Garavaglia}
\author[a,2]{Remco van der Hofstad}
\author[a,b,3]{Nelly Litvak}
\affil[a]{\footnotesize Department of Mathematics and
    Computer Science, Eindhoven University of Technology, 5600 MB Eindhoven, The Netherlands}
\affil[b]{\footnotesize Department of Applied Mathematics, Faculty of Electrical Engineering, Mathematics and Computer Science,
				 University of Twente, 7500 AE Enschede, The Netherlands}

\affil[$ $]{ {\itshape Email address}: $^1$a.garavaglia@tue.nl, $^2$rhofstad@win.tue.nl, $^3$n.litvak@utwente.nl, \\

\vspace{0.4cm}
\textsc{keywords}: PageRank, local weak convergence, directed random graphs}

\date{}
\maketitle

\thispagestyle{plain}

\vspace{-1cm}
\begin{abstract}
PageRank is a well-known algorithm for measuring centrality in networks. It was originally proposed by Google for ranking pages in the World-Wide Web. One of the intriguing empirical properties of PageRank is the so-called `power-law hypothesis': in a scale-free network the PageRank scores follow a power law with the same exponent as the (in-)degrees. Up to date, this hypothesis has been confirmed empirically and in several specific random graphs models. In contrast, this paper does not focus on one random graph model but investigates the existence of an asymptotic PageRank distribution, when the graph size goes to infinity, using local weak convergence. This may help to identify general network structures in which the power-law hypothesis holds. We start from the definition of local weak convergence for sequences of (random) undirected graphs, and extend this notion to directed graphs. To this end, we define an exploration process in the directed setting that keeps track of in- and out-degrees of vertices. Then we use this to prove the existence of an asymptotic PageRank distribution. As a result, the limiting distribution of PageRank can be computed directly as a function of the limiting object. 
We apply our results to the directed configuration model and continuous-time branching processes trees, as well as preferential attachment models.

\end{abstract}

\section{Introduction and main results}
\label{sec-intro}

\subsection{Definition of PageRank}  PageRank, first introduced in \cite{PageRank}, is an algorithm that generates a centrality measure on finite graphs. Originally introduced to rank World-Wide Web pages, PageRank has a wide range of applications including citation analysis \cite{Chen,MaGuan,Walt3}, community detection \cite{Andersen} or social networks analysis \cite{Bahmani,Wang2013}.  

Consider a finite directed (multi-)graph $G$ of size $n$. We write $[n] = \{1,\ldots,n\}$. Let $e_{j,i}$ be the number of directed edges from $j$ to $i$. Denote the in-degree of vertex $i\in[n]$ by $\din_i$  and the out-degree  by $\dout_i$ . Fix a parameter $c\in (0,1)$, which is called the {\em damping factor}, or teleportation parameter. PageRank is the unique vector $\sub{\pi}(n) = (\pi_1(n),\ldots,\pi_n(n))$ that satisfies, for every $i\in[n]$,
\eqn{
\label{eq:for:pagerank:formulation}
	\pi_i(n) = c\,\sum_{j\in[n]}\frac{e_{j,i}}{\dout_j}\,\pi_j(n)+\frac{1-c}{n}.
}
PageRank has the natural interpretation as the invariant measure of a random walk with restarts on $G$. With probability $c$ the random walk takes a simple random walk step on $G$, while with probability $(1-c)$ it moves to a uniformly chosen vertex. Here by simple random walk we mean the random walk that chooses, at every step, an outgoing edge from the current position uniformly at random. When $\dout_j>0$ for all $j\in[n]$, then the invariant measure of this random walk is given exactly by \eqref{eq:for:pagerank:formulation}. The interpretation is easily extended to the case when some vertices $j$ have $\dout_j=0$ by introducing a random jump from such vertices; in this case the stationary distribution will be the solution of \eqref{eq:for:pagerank:formulation} renormalized to sum up to one~\cite{lee2003two-stage}.

In this paper we consider the {\em graph-normalized version} of PageRank, which is the vector defined as $\sub{R}(n) = n \sub{\pi}(n)$. We call both the algorithm and the vector $\sub{R}(n)$ PageRank,  the meaning will always be clear from the context. The graph-normalized version of (\ref{eq:for:pagerank:formulation}) is the unique solution $\sub{R}(n)$ to
\eqn{
\label{for:pagerank:formulation:normn}
	R_i(n) = c\,\sum_{j\in[n]}\frac{e_{j,i}}{\dout_j}\,R_j(n)+(1-c).
}
PageRank has numerous generalizations. For example, after a random jump, the random walk might not restart from a uniformly chosen vertex, but rather choose vertex $i$ with probability $b_i$, where $\sum_{i=1}^nb_i=1$.  Equation 
\eqref{eq:for:pagerank:formulation} then becomes
\eqn{
\label{eq:for:pagerank:formulation:pers}
	R_i(n) = c\,\sum_{j\in[n]}\frac{e_{j,i}}{\dout_j}\,R_j(n)+(1-c)b_i.
}
This generalized version of PageRank is sometimes called {\it topic-sensitive}~\cite{Haveliwala2002PPR} or {\it personalized}. We note that the term {\it personalized PageRank} often refers to the case when the vector ${\bf b}=(b_1,\ldots, b_n)$ has one of its coordinates equal to 1, and the rest equal to zero, so that the random walk always restarts from the same vertex. One can generalize further, e.g., allow the probability $c$  to be random as well. The literature~\cite{Litvak2,Jelenkovic2010WBP,Lee2017PR-IRG,Litvak4} usually studies the following graph-normalized equation:
\eqn{
\label{for:recursive:pagerank:pers:norm}
	R_i(n) = \sum_{j: e_{j,i}\ge 1}A_j R_j(n) + B_i,\quad i\in[n],
}
where $(A_i)_{i\in[n]}$ and $(B_i)_{i\in[n]}$ are values assigned to the vertices in the graph. In this paper, for simplicity of the argument, we will focus on the basic model \eqref{for:pagerank:formulation:normn} and then, in Section~\ref{sec:generalized},  extend the results to the more general model \eqref{for:recursive:pagerank:pers:norm} with $A_j=C_j/\dout_j$, where $C_j$'s are random variables bounded by $c<1$, and $(B_i)_{i\in[n]}$ are i.i.d.\ across vertices.

\begin{figure}[hb]
\centering
	\includegraphics[width = 0.35\textwidth]{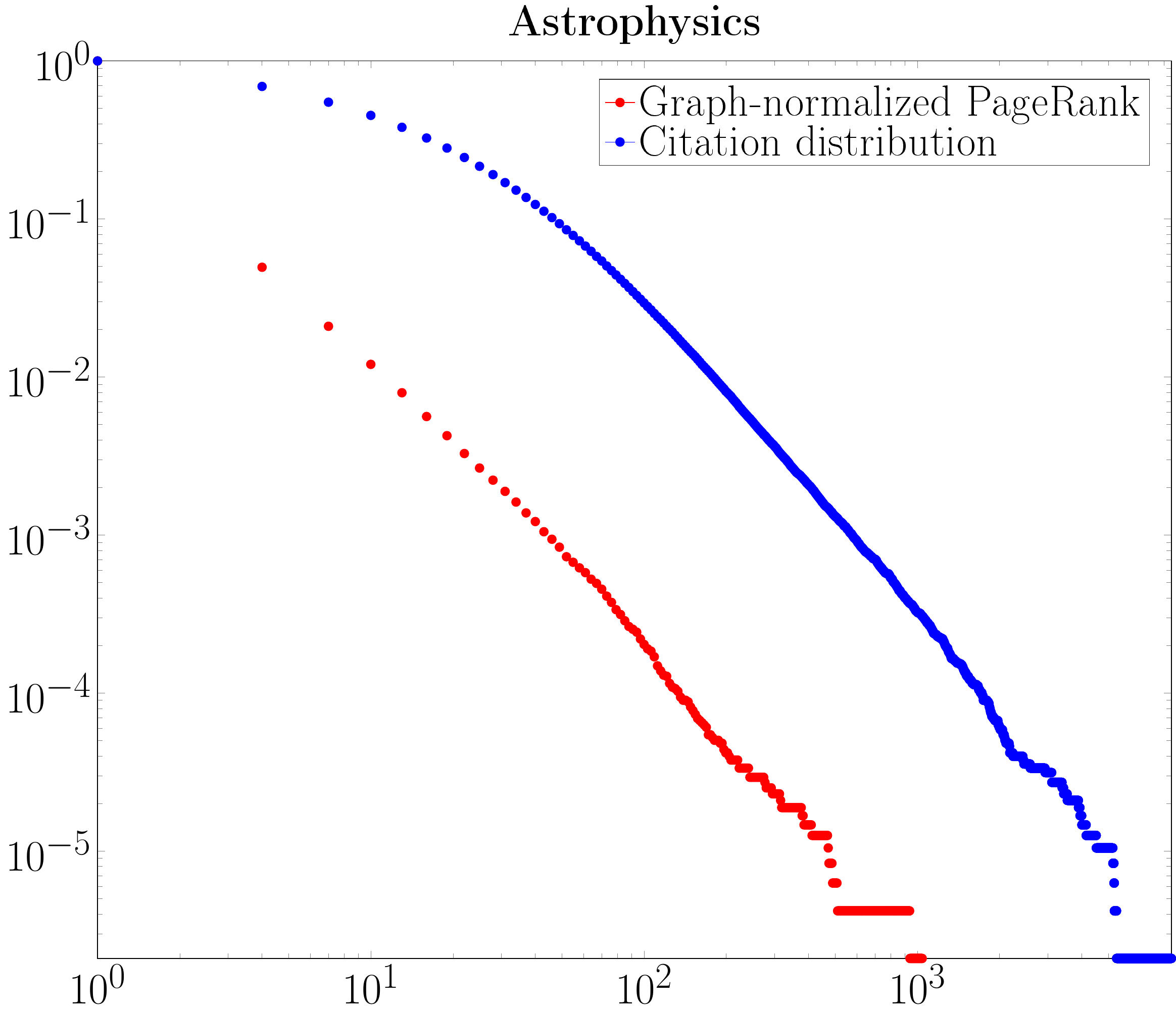} \hspace{1cm}
		\includegraphics[width = 0.35\textwidth]{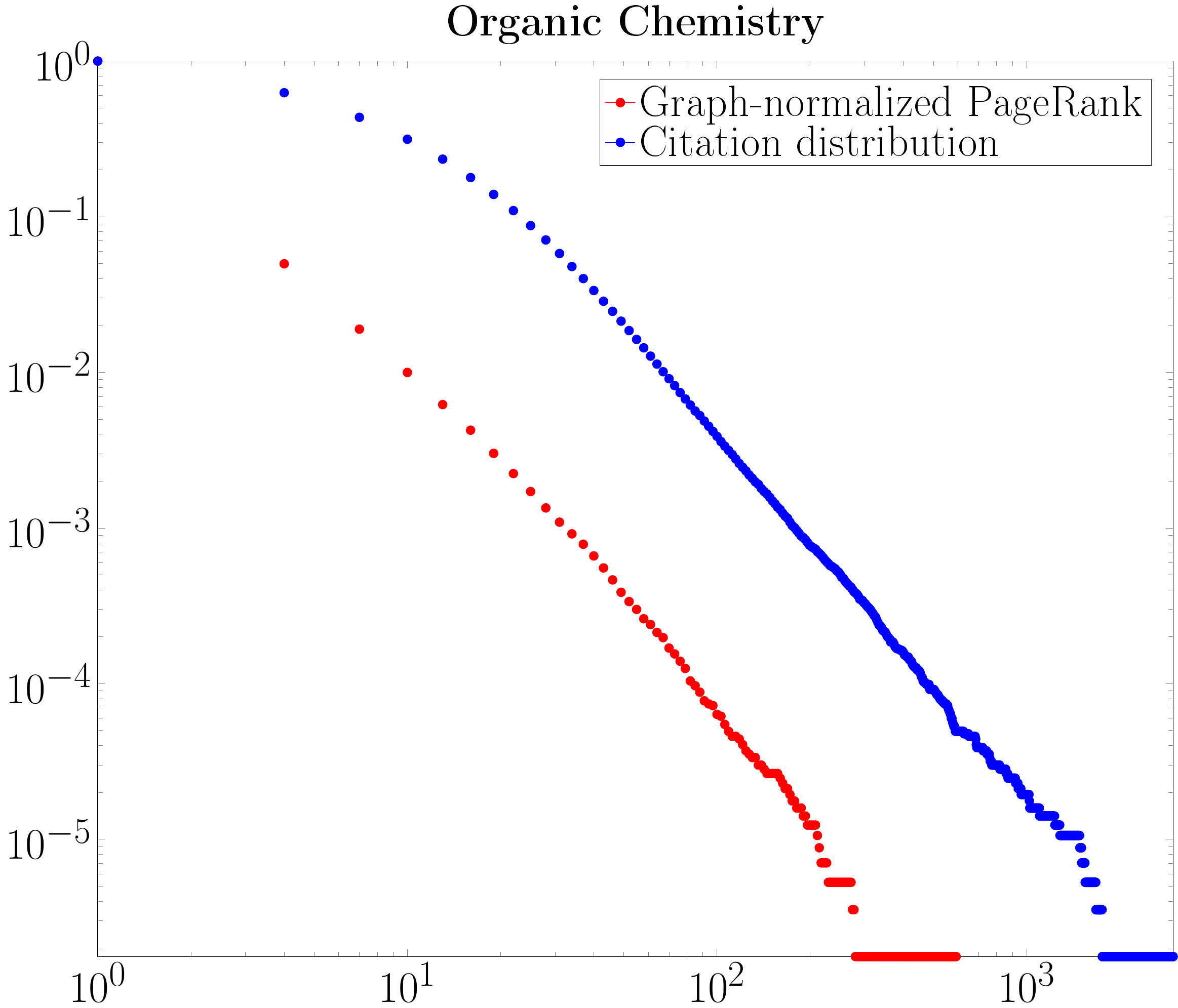} 
		\caption{Citation networks from {\em Web of Science}. Citation networks can be seen as directed graphs where references are directed edges. We considered papers from Astrophysics (left) and Organic Chemistry (right). The two loglog scale plots show two different distributions each. The blue data represent the tail distribution of the in-degree  (so number of citations) of a uniformly chosen vertex. The red data represents the tail distribution of the graph-normalized PageRank of a uniform vertex. Notice that, in both cases, the two distributions show a remarkably similar power-law exponent.}
		\label{fig:pagerank:degree}
\end{figure}

\subsection{ Power-law hypothesis for PageRank}
\label{sec:pagerank:powerlaw}
It has been observed \cite{litvak5,Pandurangan} that in real-world networks with power-law (in-)degree distributions, PageRank follows a power-law with the same exponent. Figure \ref{fig:pagerank:degree} illustrates this phenomenon in citation networks. Empirical studies suggest that the power-law hypothesis holds rather generally. However, proving this appears to be challenging. Some progress has been made in \cite{Avrac} for the average PageRank in a Preferential Attachment model. In a series of papers \cite{litvak5,Litvak4}, the result was proved provided that PageRank satisfies a branching-type of recursion. Then, in fact, a network is modeled as a branching process, with independent labels representing out-degrees. The full proof has been obtained in \cite{Litvak2} for the directed configuration model and recently in \cite{Lee2017PR-IRG} for directed generalized random graphs.

The motivation of this paper is in finding general conditions for the existence of an asymptotic PageRank distribution. We prove the convergence (in some sense) of PageRank for a large class of models. Our results also shed light on the power-law hypothesis. Indeed,  when the limit is a branching tree, this directly implies the power-law hypothesis, based on the above mentioned results in the literature. When the limit is different, e.g., the tree generated by a continuous-time branching process, proving the power-law hypothesis remains an open problem. Our results imply however that it is sufficient to study PageRank  on the limiting object, which hopefully is simpler since the graph-size asymptotics no longer interfere.

\subsection{Overview of the paper} In Section~\ref{sec:main_result} we explain our general methodology and the main ideas behind the proofs and the results. In Section~\ref{sec:LWCintro} we introduce the notion of local weak convergence, which is crucial in our approach, and explain how we extend it to directed graphs. Section~\ref{sec:PRproof_intro} describes the major steps in proving weak convergence for PageRank. In Section~\ref{sec:examples_intro} we present three examples that illustrate our results: directed configuration models (DCMs) (and the extension to directed inhomogeneous random graphs), continuous-time branching processes (CTBPs), and (directed) preferential attachment models (DPAs). In Section~\ref{sec:open_problems}, we list some open problems.

Sections~\ref{sec:LWC} -- \ref{sec:examples} contain formal proofs. In Section \ref{sec:LWC} we explain local weak convergence for undirected graph sequences (Section \ref{sec:LWC:und}) and introduce our construction for directed graph sequences (Section \ref{sec:LWC:dir}), which is taylored to our PageRank application. In section \ref{sec:generalized} we study generalized PageRank. In Section \ref{sec:pagerank} we formally prove the main result. The three examples, DCM (and Inhomogeneous random graphs), CTBPs and DPA, are analyzed, respectively, in Sections \ref{sec:ex:DCM} (and \ref{sec-ex-IRG}), \ref{sec:ex:CTBP} and \ref{sec:examples:PAM}.

\section{Main result and methodology}
\label{sec:main_result}

Note that for any deterministic graph, PageRank  is a deterministic vector. We are interested in the PageRank associated to {\em random} graphs. In particular, we want to investigate the asymptotic behavior of the PageRank value of a uniformly chosen vertex $V_n$, as the size of the graph grows. In this case we have two sources of randomness: the choice of the vertex and the randomness of the graph itself. Our main result shows that, for a nice enough sequence of directed graphs $(G_n)_{n\in\N}$, $R_{V_n}(n)$ converges in distribution to a limiting random variable:

\begin{Theorem}[Existence of asymptotic PageRank distribution]
\label{th:pagerank}
Consider a sequence of directed random graphs $(G_n)_{n\in\N}$. Then, the following hold:
\begin{enumerate}
	\item  If $G_n$  {\em converges  in distribution in the local weak sense}, then there exists a limiting distribution $R_\emp$, with $\E[R_\emp]\leq 1$, such that
$$
	R_{V_n}(n)\stackrel{d}{\displaystyle \longrightarrow}R_\emp;
$$
	\item If $G_n$  {\em converges  in probability in the local weak sense}, then there exists a limiting distribution $R_\emp$, with $\E[R_\emp]\leq 1$, such that,  for every $r>0$, 
	$$
		\frac{1}{n}\sum_{i\in[n]}\I\{R_i(n)>r\}\stackrel{\pr}{\longrightarrow}\pr\left(R_\emp>r\right).
	$$
\end{enumerate}
\end{Theorem}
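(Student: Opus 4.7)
The plan is to iterate the PageRank equation~\eqref{for:pagerank:formulation:normn} to obtain a Neumann-series representation, identify a \emph{local} $K$-truncation whose law is controlled by directed local weak convergence, and then send $K\to\infty$ using an averaged error bound that is uniform in $n$. Iterating~\eqref{for:pagerank:formulation:normn} gives $R_i(n) = (1-c)\sum_{k\geq 0} c^k N_k(i)$, where
\[
N_k(i) = \sum_{j_0 \to j_1 \to \cdots \to j_k = i} \prod_{\ell=0}^{k-1} \frac{e_{j_\ell, j_{\ell+1}}}{\dout_{j_\ell}}
\]
is the sum of products of reciprocal out-degrees over all directed walks of length $k$ ending at $i$. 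The key observation is that the $K$-truncation
\[
R_i^{(K)}(n) := (1-c)\sum_{k=0}^{K} c^k N_k(i)
\]
depends only on the subgraph of vertices able to reach $i$ in at most $K$ steps (the $K$-in-neighborhood of $i$), together with the out-degrees of those vertices---precisely the marked rooted data retained by the directed exploration constructed in Section~\ref{sec:LWC:dir}.

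Next, define the analog $R_\emp^{(K)}$ on the limiting rooted object by the same formula applied to finite directed walks ending at the root. Because $R_i^{(K)}(n)\wedge M$ is a bounded functional of the rooted in-neighborhood up to depth $K$, it is continuous in the local topology, and directed local weak convergence in distribution yields $R_{V_n}^{(K)}(n)\wedge M \stackrel{d}{\to} R_\emp^{(K)}\wedge M$ for every fixed $M$ and $K$. A direct row-sum calculation gives $\sum_i N_k(i)=n$ whenever all out-degrees are positive (the degenerate case being absorbed by the dangling-node convention recalled after~\eqref{eq:for:pagerank:formulation}), so $\E[R_{V_n}^{(K)}(n)]=1-c^{K+1}$ uniformly in $n$. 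This uniform first-moment bound supplies uniform integrability, and letting $M\to\infty$ upgrades the truncated convergence to $R_{V_n}^{(K)}(n)\stackrel{d}{\to}R_\emp^{(K)}$ for each $K$.

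To remove the truncation in $K$, the same averaging gives
\[
\E\bigl[R_{V_n}(n)-R_{V_n}^{(K)}(n)\bigr] = (1-c)\sum_{k>K}c^k = c^{K+1},
\]
and Markov's inequality yields $R_{V_n}(n)-R_{V_n}^{(K)}(n)\stackrel{\pr}{\to}0$ uniformly in $n$ as $K\to\infty$. Monotonicity of $R_\emp^{(K)}$ in $K$ produces the almost-sure limit $R_\emp:=\lim_K R_\emp^{(K)}$, and a triangle argument on bounded Lipschitz test functions concludes $R_{V_n}(n)\stackrel{d}{\to}R_\emp$. Fatou applied to the identity $\E[R_{V_n}(n)]=1$ (equivalent to $\sum_i R_i(n)=n$) then gives $\E[R_\emp]\leq 1$, establishing part~(1). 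For part~(2), local weak convergence \emph{in probability} implies that two independent uniform roots converge jointly to two independent copies of the limit; applied to the bounded local indicator $\I\{R_i^{(K)}(n)\wedge M > r\}$, this forces the variance of the empirical proportion $\frac{1}{n}\sum_i\I\{R_i^{(K)}(n)\wedge M>r\}$ to vanish, and the same uniform truncation bound promotes the conclusion to the untruncated statement.

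The main obstacle is the triple interchange of limits $n,M,K\to\infty$: the local truncation $R_i^{(K)}(n)$ is unbounded pointwise, so the averaged identity $\E[R_{V_n}^{(K)}(n)]=1-c^{K+1}$ is what permits the boundedness cap $M$ to be removed while distributional convergence is preserved. A secondary technical subtlety is the handling of vertices with $\dout_j=0$, which must be incorporated into the exploration (via the dangling-node convention or by augmenting the transition kernel) so that the series representation and the local description remain consistent throughout.
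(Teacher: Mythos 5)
Your proposal is correct and follows essentially the same route as the paper: the finite-path truncation $R^{(K)}$, the uniform-in-$n$ error bound $c^{K+1}$ from (sub)stochasticity, continuity of the depth-$K$ functional in the directed local topology, the monotone limit $R_\emp^{(K)}\uparrow R_\emp$ on the limiting object, and Fatou for $\E[R_\emp]\le 1$ are exactly the paper's Steps (a)--(c). The only cosmetic differences are that the paper tests against the bounded indicator $\I\{R^{(K)}>r\}$ rather than capping at $M$ and invoking uniform integrability, and for part (2) it reads the convergence of the empirical fraction directly off Definition \ref{def:LWC:und:random} instead of going through the two-independent-roots variance argument.
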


Theorem \ref{th:pagerank} establishes that, whenever a sequence of directed random graphs converges in the {\em local weak sense}, then the distribution $R_{V_n}(n)$ admits a limit in distribution, $R_\emp$. This limit has the interpretation of PageRank on the (possibly infinite) limiting graph.  
Theorem \ref{th:pagerank} can be extended to personalized PageRank defined in \eqref{for:recursive:pagerank:pers:norm} under additional conditions on the random variables $(A_i)_{i\in\N}$ and $(B_i)_{i\in\N}$. The precise formulation, that requires more notation, is given in Theorem \ref{th:pagerank:pers}.

\begin{Remark}[Stochastic lower bound for PageRank]
\label{rem:stoch:lowerbound}
{\em Theorem \ref{th:pagerank} gives a rough lower bond on the tail of the asymptotic PageRank distribution for a graph sequence. In simple words, we can write 
\eqn{
\label{for:stoch:lowerbound}
	R_\emp \geq (1-c)\bigg(1+c\sum_{i=1}^{\Din_\emp}\frac{1}{\mout_i}\bigg),
}
where $\emp$ is a vertex called root in the local weak limit of the graph sequence $(G_n)_{n\in\N}$, $\Din_\emp$ is the graph limiting in-degree distribution, and $\mout_i$ represent the out-degree in the LW limit. All the notation in \eqref{for:stoch:lowerbound} is introduced in Sections \ref{sec:LWC:dir} and \ref{sec:pagerank}. In particular, \eqref{for:stoch:lowerbound} implies that $R_\emp>1-c$ a.s.. Since $\mout$ represents the limiting out-degree distribution, it follows that, if $(G_n)_{n\in\N}$ has out-degrees uniformly bounded by a constant $A<\infty$,   
$$
	R_\emp \geq (1-c)\bigg(1+\frac{c}{A}\Din_\emp\bigg).
$$
As a consequence, if the limiting in-degree distribution obeys a power law, then the tail of the distribution $R_\emp$ is bounded from below by a multiple of the tail of the in-degree. This establishes a power-law lower bound for $R_\emp$. This is a partial solution of the power-law hypothesis mentioned in Section \ref{sec:pagerank:powerlaw}.}
\end{Remark}

We next explain the ingredient of our main result, which is local weak convergence.

\subsection{Local weak convergence for directed graphs}
\label{sec:LWCintro}
Local weak (LW) convergence is a concept that was first introduced in  \cite{Aldous2007,Aldous2004,benjamini} for undirected graphs. In this framework, a sequence of undirected random graphs, under relatively weak conditions, converges to a (possibly random) {\em rooted graph}, i.e., a graph where one of the vertices is labeled as root. In simple words, the limiting graph resembles the {\em neighborhood of a typical vertex} in the graph sequence. This methodology has been shown to be useful to investigate {\em local} properties of a graph sequence -- the properties that depend on the {\em local neighborhood} of vertices.

In the literature, limits of different types of random graphs have been investigated (Aldous and Steele give a survey in \cite{Aldous2007}).   Grimmett \cite{Grimm80} obtained the LW limit for the uniform random tree. Generalized random graphs \cite{Britton,ChungLu2,ChungLu,HoogVdH} also converge in the LW sense under some regularity conditions on the weight distribution. Convergence of undirected configuration model is proved in \cite[Chapter 2]{vdH2}. In many random graph contexts, the LW limit is a branching process, and LW convergence provides a method to compare neighborhoods in random graphs to branching processes.

A recent work by Berger et al. \cite{BergerBorgs} investigates the LW limit for preferential attachment models, in the case of fixed number of edges and no self-loops allowed. In particular, their proof  covers the case of a power-law distribution with exponent $\tau\geq3$. Dereich and Morters \cite{Der2011,Der2009,Der2013} establish the LW limit in the case of preferential attachment models with conditionally independent edges. 

In the local weak convergence setting, a sequence of graphs $(G_n)_{n\in\N}$ converges to a (possibly) random rooted graph $(G,\emp)$ that is a {\em rooted graph}. Here $\emp\in V(G)$ denotes the root.

Heuristically, $G_n\rightarrow (G,\emp)$ in the LW sense when the law of the neighborhood of a typical vertex in $G_n$ converges to the law of the neighborhood of the root in $G$. We give now an intuitive formulation of this concept (for a precise definition, see Section \ref{sec:LWC:und}).
For a vertex $i$ in a graph $G_n$, denote the neighborhood of $i$ up to distance $k$ by ${U}_{\leq k}(i)$.  Then, for a random rooted graph $(G, \emp)$, we say that $G_n\rightarrow G$ if, for any finite rooted graph $(H,y)$, and any $k\in\N$,
\eqn{
\label{for:heur}
	\frac{1}{n}\sum_{i\in[n]}\I\{U_{\leq k}(i) \cong (H,y)\} \longrightarrow \pr\left(U_{\leq k}(\emp) \cong (H,y)\right),
}
where $\I\{\cdot\}$ is an indicator of event $\{\cdot\}$, and $U_{\leq k}(\emp)$ is the $k$-neighborhood of $\emp$ in $G$.  The event $\{U_{\leq k}(i) \cong (H,y)\}$  means that the $k$ neighborhood of $i$ is structured as $(H,y)$, ignoring the precise labeling of the vertices.  Notice that the left-hand term in \eqref{for:heur} is just the probability that the $k$-neighborhood of a uniformly chosen vertex in $G_n$ is structured as $(H,y)$. 

\eqref{for:heur} is formulated for a deterministic graph sequence $(G_n)_{n\in\N}$. When $(G_n)_{n\in\N}$ is a sequence of {\em random} graphs, the  left-hand term in \eqref{for:heur} is a random variable.
In this case there are different modes of convergence, as stated in Definition \ref{def:LWC:und:random}.  For example, we say that $G_n\rightarrow (G,\emp)$ in probability if, for any finite rooted graph $(H,y)$, and any $k\in\N$,
\eqn{
\label{for:heur:2}
	\frac{1}{n}\sum_{i\in[n]} \I\{U_{\leq k}(i) = (H,y)\} \stackrel{\pr}{\longrightarrow} \pr\left(U_{\leq k}(\emp) = (H,y)\right).
}

In Section \ref{sec:LWC:dir} we will extend the definition of LW convergence to directed graphs. Here we introduce the main ideas behind the construction.

The major problem in the construction is that the {\em exploration of neighborhoods} is not uniquely defined in directed graphs. Indeed, in the exploration process (rigorous definition is given in Definition \ref{def:rootneigh:dir}), { motivated by the PageRank problem}, we { naturally} explore directed edges {\em only in their opposite direction}. In other words, a directed edge $(j,i)$ is only explored from $i$ to $j$. Clearly, since edges are not explored in both directions, starting from the root we might not be able to explore all the graph. Heuristically, from the point of view of the root $\emp$, {\em only part of the graph has influence on the incoming neighborhood of $\emp$}. This is very different from the undirected case, where the exploration process continues until the entire graph is explored (when the graph is connected). We resolve this by introducing so-called marks to track the explored and not-explored out-edges in the graph. The precise definition of LW convergence in directed graphs is given in Section \ref{sec:LWC:dir}.


We point out that our construction is  one of many possible ways to define LW convergence for directed graphs.  For instance, Aldous and Steele~\cite{Aldous2004} allow edge weights. This might be sufficient to define an inclusion of directed graphs in the space of undirected graphs with edge weights, and use the notion for undirected graphs to define an exploration process for directed graphs. The advantage of our construction is that it requires the minimum amount of information, sufficient to prove the convergence of PageRank, which is the main problem we aim to resolve.

Definition \ref{def:LWC:dir} below, together with Remark \ref{rem:crit:Dlwc}, gives a criterion for the convergence of a sequence of directed random graphs, that can be presented as marked graphs by just assigning {\em marks equal to out-degrees}. The precise formulation requires heavy notation that we have not introduced yet, therefore we do not state it here. 

The advantage of having a  LW limit $(G,\emp)$ is that a whole family of  local properties of the graph sequence {\em can pass to the limit}, and the limit is given by a local property of $(G,\emp)$ itself. More precisely, in the construction of LW convergence, one defines a {\em distance} between (marked directed) rooted graphs (see Defintion \ref{def:locdist:und}).  Then, any function $f$ from the space of rooted graphs to $\R$ that is {\em bounded and continuous} with respect to the distance function can pass to the limit, i.e., for $V_n$ a uniformly chosen vertex in $G_n$, 
$$
	\lim_{n\rightarrow\infty} \E\left[f(G_n,V_n)\right] = \E\left[f(G,\emp)\right].
$$
This can be rather useful in understanding the asymptotic behavior of local properties of a graph sequence. As a toy example, in the undirected setting, take the function $f(G,\emp) = \I\{d_\emp = k\}$. It is easy to show, using Definition \ref{def:locdist:und}, that $f$ is a continuous function. Assume that a sequence of graphs $G_n\rightarrow (G,\emp)$ locally weakly, where  $(G,\emp)$ is random rooted graph. Then, for every $n\in\N$, 
$$
	\E\left[f(G_n,V_n)\right]  = \frac{1}{n}\sum_{i\in[n]} \pr\left(d_i = k\right) = \pr\left(d_{V_n}=k\right),
$$
i.e., $f$ evaluated on a random root is just the probability that a uniformly chosen vertex has degree $k$. As a consequence, the sequence $(G_n)_{n\in\N}$ has a limiting degree distribution given by
$$
	\lim_{n\rightarrow\infty}\pr\left(d_{V_n}=k\right) = \pr\left(d_{\emp}=k\right),
$$
where $\emp$ is the root of $G$. Other examples of continuous functions in the undirected setting are the nearest-neighbor average degree of a uniform vertex, the finite-distance neighborhood of a uniform vertex and the average pressure per particle in the Ising model. In our directed setting, it follows that, if $G_n\rightarrow (G,\emp,M(G))$,
$$
	(\mout_{V_n},\din_{V_n}) \stackrel{d}{\longrightarrow} (\mout_{\emp}, \din_{\emp}),
$$
where $M(G)$ is the set of marks of the limiting graph, $(\mout_{V_n},\din_{V_n})$ are the mark and the in-degree of a uniformly chosen vertex $V_n$, and 
$(\mout_{\emp}, \din_{\emp})$ are the mark and the in-degree of the root $\emp$ in the limiting directed graph $G$. The notation $\mout$ hints on the relation between marks and out-degrees. When marks are assigned that are equal to the out-degree, this implies the convergence of the in- and 
out-degree of a uniformly chosen vertex. One of the surprises in our version of LW convergence is that 
in the limiting graph, the mark of the root $\mout_\emp$ is not necessarily equal to the out-degree of the root.

\subsection{Application of LW convergence to PageRank}
\label{sec:PRproof_intro}

The proof of Theorem \ref{th:pagerank} is given in Section \ref{sec:pagerank}. Here we describe the   structure of the proof, explaining why the LW convergence for directed graphs is useful. Schematically, the structure of our proof of Theorem \ref{th:pagerank} is presented in Figure~\ref{fig:scheme}. The implication (A), denoted by the dashed red arrow, is the one we aim to prove. We split it in three steps (a), (b), (c), denoted by the solid black arrows. We will now explain each step.
\begin{figure}[htb]
\begin{framed}
\centering
	\includegraphics[width = 0.4\textwidth]{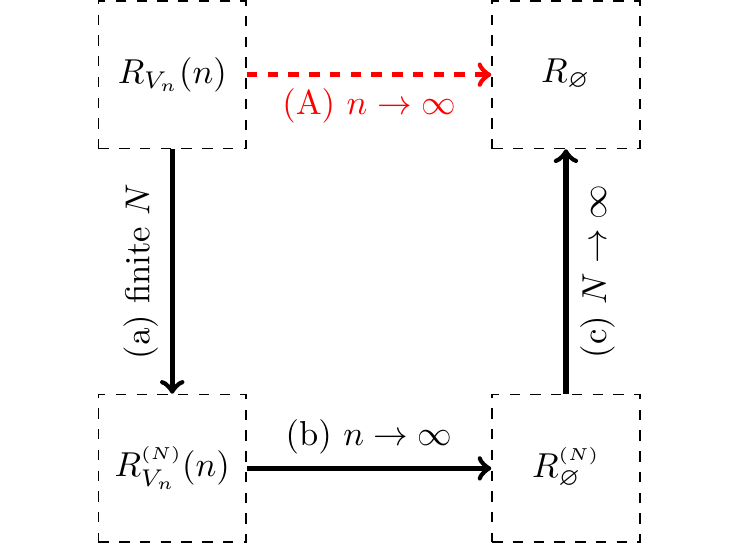}
	\caption{Structure of the proof of Theorem \ref{th:pagerank}. The (A) convergence is what we are after, the convergence in distribution of $R_{V_n}(n)$ to a limiting random variable. To prove that, we need the three different steps (a), (b), (c) given by the other arrows.}
	\label{fig:scheme}
\end{framed}
\end{figure}

\paragraph {\bfseries Step (a): Finite approximations.} It is well known \cite{Andersen,Avrac,Bianchini,Litvak2} that  PageRank can be written as
\[
	R_i(n) = (1-c)\left(1+\sum_{k=1}^\infty c^k\sum_{\ell\in \mathrm{path}_i(k)}\prod_{h=1}^k \frac{e_{\ell_{h},\ell_{h+1}}}{\dout_{\ell_h}}\right),
\]
where $\mathrm{path}_i(k)$ is the set of directed paths of $k$ steps that end at $i$. In other words,  $R_i(n)$ is a {\em weighted sum of all the directed paths that end at $i$}. In particular, we can write finite approximations for PageRank as
\[
	R^{\sss(N)}_i(n) = (1-c)\left(1+\sum_{k=1}^N c^k\sum_{\ell\in \mathrm{path}_i(k)}\prod_{h=1}^k \frac{e_{\ell_{h},\ell_{h+1}}}{\dout_{\ell_h}}\right),
\]
where now the sum is taken over all paths of length at most $N\in\N$. We use the sequence of finite approximations $\left(R^{\sss(N)}_{V_n}(n)\right)_{n\in\N}$ to estimate the PagreRank of a random vertex with {\em exponentially small precision} by its finite approximations. We prove that, for any $\varepsilon>0$, 
	$$	
		\pr\left(|R_{V_n}(n)-R_{V_n}^{\sss(N)}(n)|\geq \varepsilon\right)\leq \frac{c^{N+1}}{\varepsilon}.
	$$
		Notice that the bound is {\em independent of the graph size that we consider}. This bound is true for any directed graph of any size, so it does not require any assumption on the graph sequence.

	\paragraph{\bfseries Step (b): LW convergence.} The finite approximations of PageRank are {\em continuous} with respect to the local weak topology.  Furthermore, by definition, the $N$th approximation of PageRank depends only on the incoming neighborhood of a vertex {\em up to distance $N$}. Note that $R_i(n)$ and $R^{\sss(N)}_i(n)$ are not bounded. However, for any $r\geq0$, the function $\I\{R_{V_n}^{\sss(N)}>r \}$ is a continuous and bounded function on marked directed rooted graphs, therefore we can pass to the limit for any $N\in \N$. It follows that
	$$
		\lim_{n\rightarrow\infty}\E\left[{ \I\{R_{V_n}^{\sss(N)}>r \}}\right] = \lim_{n\rightarrow\infty}\pr\left(R^{\sss(N)}_{V_n}(n)>r\right) = \pr\left(R^{\sss(N)}_\emp >r\right),
	$$
	where in the last term $\emp$ is the root of the limiting random marked directed rooted graph $(G,\emp,M(G))$. As a consequence, every term of the sequence $(R^{\sss(N)}_{V_n}(n))_{n\in\N}$ converges in distribution. Notice that similar arguments apply for Theorem \ref{th:pagerank}(b).

	\paragraph{\bfseries Step (c): Finite approximations on the limiting graph.} On the limiting random marked directed rooted graph $(G,\emp,M(G))$, the sequence $(R^{\sss(N)}_\emp)_{N\in\N}$ is a monotonically increasing sequence of random variables. Therefore, there exists an almost sure limiting random variable $R_\emp$. Using the fact that $(G,\emp,M(G))$ is a local weak limit of a sequence of random directed graphs, and $\E[R_{V_n}]=1$ for every $n\geq1$, it is possible to prove that $\E[R_\emp]\leq 1$, so that $\pr\left(R_\emp<\infty\right)=1$.

\begin{remark}
{\rm We emphasize that the above strategy is meant just to give the intuition behind the proof. In particular, in the proof it is necessary to be careful and specify with respect to which randomness we take expectations. In fact, when we consider local weak convergence of random graphs, we have two sources of randomness: the choice of the root and the randomness of the graphs. All these are made rigorous in Section \ref{sec:pagerank}.}
\end{remark}

\subsection{Examples}
\label{sec:examples_intro}
We consider examples of directed random graphs, for which we prove LWC and find the limiting random graph. Thus, PageRank in these models converges to PageRank on the limiting graph. The following theorem makes this precise for several random graph models that have been studied in the literature. For precise definitions of the models, as well as the proof, we refer to Section \ref{sec:examples}.

\begin{Theorem}[Examples of convergence]
\label{th:meta:theorem}
The following models converge in the directed local weak sense:
\begin{enumerate}
	\item the {\em directed configuration model} converges {\em in probability};
	\item the {\em continuous-time branching processes} converge {\em almost surely};
	\item the {\em directed preferential attachment model} converges {\em in probability}.
\end{enumerate}
As a consequence, for these models there exists a limiting PageRank distribution, and the convergence holds as specified.
\end{Theorem}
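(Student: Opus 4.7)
The plan is to verify the directed local weak convergence criterion (Definition \ref{def:LWC:dir}, together with Remark \ref{rem:crit:Dlwc}) separately for each of the three models, and then invoke Theorem \ref{th:pagerank} to conclude the corresponding PageRank convergence. Concretely, for each model I would identify a candidate random rooted marked graph $(G,\emp,M(G))$ (marks equal to out-degrees) and show that, for every finite marked directed rooted graph $(H,y,m)$ and every $k\in\N$,
\[
\frac{1}{n}\sum_{i\in[n]}\I\{U_{\leq k}(i)\cong(H,y,m)\}\longrightarrow \pr(U_{\leq k}(\emp)\cong(H,y,m))
\]
in the asserted mode (in probability for DCM and DPA, almost sure for CTBPs). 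This reduces each claim to exhibiting a limit object and proving convergence of empirical neighborhood counts.

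For the DCM, the candidate limit is the marked random tree in which the root has joint $(\din,\dout)$ distributed as the limiting empirical joint degree distribution, every vertex reached along an in-edge has out-degree (its mark) drawn from the size-biased out-degree law, and its in-degree drawn from the corresponding conditional distribution given the out-degree; the size-biasing appears because pairing an in-half-edge of the current vertex with a uniformly chosen out-half-edge selects a vertex with probability proportional to its out-degree. I would then run the standard DCM exploration from a uniform root up to depth $k$: since the number of half-edges is of order $n$ while only $O(1)$ are touched, the probability of a cycle or a re-visit within depth $k$ is $O(1/n)$, and successive pairings become asymptotically independent size-biased picks. A first-moment computation gives convergence of expectations, and the usual second-moment computation on pairs of vertices with disjoint $k$-neighborhoods upgrades this to convergence in probability.

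For CTBPs the sequence is by construction a family of trees, so the incoming $k$-neighborhood of any vertex is a deterministic function of its genealogy. I would take the candidate limit to be the infinite branching tree rooted at an individual sampled according to the Malthusian stationary age/type distribution, and use the Nerman--Jagers counting theorems: under the standard Malthusian assumptions these give almost sure convergence of empirical functionals along the ancestral line of a uniformly chosen individual, which applied to the indicator $\I\{U_{\leq k}(\cdot)\cong(H,y,m)\}$ yields the almost sure convergence of neighborhood counts.

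The DPA case is the main technical obstacle. The strategy is to use the P\'olya urn representation of preferential attachment, in the spirit of Berger--Borgs and Dereich--M\"orters, which recasts the sequential attachment probabilities as independent Beta random variables and thus decouples edge formation from the growth of the graph. Under this coupling, the $k$-neighborhood of a uniformly chosen vertex can be shown to converge to a P\'olya-point-type marked tree whose root has asymptotic attachment time uniform on $[0,1]$ and whose marks correspond to the asymptotic out-degrees. Convergence in probability of neighborhood counts then follows by combining first-moment estimates based on the attachment rule with second-moment estimates that exploit the conditional independence given the Beta variables. The delicate points are correctly size-biasing along the exploration---older, higher-degree vertices attract in-edges at a disproportionately high rate---and controlling the boundary contributions of extremely old and extremely young vertices; once these are handled, verifying the criterion of Definition \ref{def:LWC:dir} becomes conceptually analogous to the DCM case, but with a more intricate limiting tree.
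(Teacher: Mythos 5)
Your overall strategy --- verify the neighborhood-count criterion of Definition \ref{def:LWC:dir} and Remark \ref{rem:crit:Dlwc} model by model and then invoke Theorem \ref{th:pagerank} --- is exactly the paper's, which splits the theorem into Propositions \ref{prop:CDM:limit}, \ref{prop:CTBP:limit} and \ref{prop:PAM:dirLW}. Your DCM argument (couple the depth-$k$ exploration to a marked Galton--Watson tree whose non-root vertices carry the size-biased joint law $p^\star$, then run a second-moment computation on pairs of vertices with disjoint neighborhoods) is the paper's proof of Proposition \ref{prop:CDM:limit} almost verbatim. For DPA the paper does not redo the P\'olya-urn analysis: it cites \cite[Theorem 2.2]{BergerBorgs} for the undirected limit and observes that the directed exploration amounts to exploring only younger vertices inside the P\'olya point graph, which yields the directed P\'olya point graph of Definition \ref{def:pointgraph:dir}. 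Your plan to re-derive the coupling is viable but much heavier; you should also note that the paper obtains only the case $\delta\geq 0$ unconditionally, the case $\delta\in(-m,0)$ being left conditional on an extension of \cite{BergerBorgs}.

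The CTBP part contains a genuine error. Edges are directed from children to parents and the directed exploration traverses edges backwards, so the explorable neighborhood of an individual is its \emph{descendant} subtree, not its ancestral line. Consequently the local weak limit is the \emph{finite} tree $\T(T_{\alpha^*})$ --- an independent copy of the branching process observed at an independent exponential time with the Malthusian rate --- and not an infinite tree rooted at a stationary individual; the paper stresses this finiteness explicitly after Proposition \ref{prop:CTBP:limit}. This is not merely a matter of wording: the Nerman--Jagers counting theorem \eqref{th-expogrowth-f2} applies to $\I\{U_{\leq N}(\cdot)\cong(H,y,M(H))\}$ precisely because that indicator is a random characteristic, i.e., a bounded function of the individual's age and of the birth processes of the individual and its descendants. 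A functional of the ancestral line is not a random characteristic in this sense, and the counting theorem does not apply to it. Once you replace ``ancestral line'' by ``descendant subtree'' and take $\T(T_{\alpha^*})$ as the limiting object, your argument coincides with the paper's proof of Proposition \ref{prop:CTBP:limit}.
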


\begin{Remark}[Power-law lower bound]
{\em 
The directed preferential attachment model and continuous-time branching processes both have constant out-degree. Therefore, they satisfy the condition in Remark \ref{rem:stoch:lowerbound}. Thus, their limiting PageRank distributions are stochastically bounded from below by a multiple of the limiting in-degree distributions. The directed configuration model satisfies Remark \ref{rem:stoch:lowerbound} whenever the out-degree distribution has bounded support.
}
\end{Remark}

The proof of Theorem \ref{th:meta:theorem} is divided into three propositions, respectively Proposition \ref{prop:CDM:limit} for directed configuration model, Proposition \ref{prop:CTBP:limit} for continuous-time branching processes and Proposition \ref{prop:PAM:dirLW} for directed preferential attachment model.

\subsection{Open problems}
\label{sec:open_problems}

\paragraph{Extension to exploration of outgoing edges.} In this paper we extend the definition of local weak convergence to directed graphs. Moved by the interest in PageRank algorithms on random graphs, we build our definition on the {\em exploration of incoming edges} in their opposite direction, i.e., an edge $(i,j)$ is explored from $i$ to $j$. The outgoing edges are considered as marks and we do not explore them. In the same way, it is possible to define the exploration process according to the natural direction of the edges. In this case, we consider {\em outgoing neighborhoods} instead. The definition of LW convergence would just be a consequence of symmetry. This second interpretation might be useful, for instance, in the study of diffusion processes on graphs, such as epidemic spread. An interesting and more complex extension would be to explore the incoming and outgoing neighborhoods {\em at the same time}.


%

\paragraph{PageRank on limiting graphs.}
 We are able to prove that, under relatively general assumptions, a sequence of random directed graphs admits a limiting distribution for the PageRank of a uniformly chosen vertex. In this way, we have moved the analysis of a graph's PageRank distribution from a whole sequence of graphs to a single (possibly infinite) rooted directed marked graph. Note that we prove the existence of such distribution, but we  do not always have a convenient description of it. It will be interesting to investigate the behavior of this limiting distribution. In particular, it is interesting to investigate the conditions under which the rank of the root in the limiting graph shows a {\em power-law tail}, and thus confirm the power-law hypothesis.

\vspace{0.3cm}
The remainder of the paper provides formal proofs of what has been discussed above. 

\section{Local weak convergence}
\label{sec:LWC}
\subsection{Preliminaries: LWC of undirected graphs}
\label{sec:LWC:und}
We present the definition of LWC for undirected graphs first, since the construction for directed graphs is similar. We start by defining what a rooted graph is:

\begin{Definition}[Rooted graph]
Let $G$ be a locally finite graph with vertex set $V(G)$ (finite or countable), and edge set $E(G)$. Fix a vertex $\emp\in G$ and call it the {\em root}. The pair $(G,\emp)$ is called a {\em rooted graph}.
\end{Definition}
We are not interested in the labeling of the vertices, but only in the graph structure. For this, we define isomorphisms between rooted graphs as follows:
\begin{Definition}[Isomorphism]
An isomorphism between two rooted graphs $(G,\emp)$ and $(G',\emp')$ is a bijection $\gamma : V(G) \rightarrow V(G')$ such that
\begin{enumerate}
	\item $(j,i)\in E(G)$ if and only if $(\gamma(j),\gamma(i))\in E(G)$;
	\item $\gamma(\emp) = \emp'$.
\end{enumerate}
We write $(G,\emp)\cong (G',\emp')$ to denote that $(G,\emp)$ and $(G',\emp')$ are isomorphic rooted graphs.
\end{Definition}
Denote the space of all rooted graphs (up to isomorphisms) by $\G_\star$. Formally, $\G_\star$ is the quotient space of the set of all locally finite rooted graphs with respect to the equivalence relation given by isomorphisms. 

For a rooted graph $(G, \emp)\in \G_\star$, we let $U_{\leq k}(\emp)$  denote the subgraph of $G$ of all vertices at graph
distance at most $k$ away from $\emp$. 
Formally, this means that $U_{\leq k}(\emp) = (V (U_{\leq k}(\emp)),E(U_{\leq k}(\emp)))$,
where
$$
	V(U_{\leq k}(\emp) ) = \left\{i\colon d_G(i,\emp)\leq k\right\}, \quad E(U_{\leq k}(\emp) ) = \left\{\{j,i\}\colon j,i\in  V(U_{\leq k}(\emp))\right\}.
$$
We call $U_{\leq k}(\emp) $ the $k$-neighborhood around $\emp$. We use this notion to define the distance between two rooted graphs:
\begin{Definition}[Local distance]
\label{def:locdist:und}
The function $d_{loc}((G,\emp),(G',\emp')) = 1/(1+\kappa)$, where
$$
	\kappa = \inf_{k\geq 1}\left\{U_{\leq k}(\emp) \not \cong U_{\leq k}(\emp')\right\},
$$
is called the {\em local distance} on the space of rooted graphs $\G_\star$.
\end{Definition}
It is possible to prove that $d_{loc}$ is an actual distance on the space of rooted graphs. In particular, the space $(\G_\star,d_{loc})$ is a Polish space (see \cite[Appendix A]{Gabry} for the proof for an equivalent definition of a distance).  
The function $d_{loc}$ measures how distant two rooted graphs are {\em from the point of view of the root}. In many graphs though, there is no vertex that can be naturally chosen as a root, for instance in configuration models or Erd\H{o}s-R\'enyi random graph. For this reason, it is useful to choose the root {\em at random}.
Define, for any graph $G$,
\eqn{
\label{for:probP_n}
	\P(G) = \frac{1}{n}\sum_{i\in[n]} \delta_{(G,i)}.
}
Given a graph $G$ of size $n$, $\P(G)$ is a probability measure that assigns the root uniformly at random among the $n$ vertices. When we consider a sequence of graphs $(G_n)_{n\in\N}$, we denote $\P(G_n)$ simply by $\P_n$. With this notion, we are ready to define LWC for undirected deterministic graphs:

\begin{Definition}[Local weak convergence]
\label{def:LW:determin}
Consider a deterministic sequence of locally finite graphs $(G_n)_{n\in\N}$. We say that $(G_n)_{n\in\N}$ {\em converges in the local weak sense }to a (possibly) random element $(G,\emp)$ of $\G_\star$ with law  $\P$, if, for any bounded continuous function $f:\G_\star\rightarrow\R$,
$$
	\E_{\P_n}[f]\longrightarrow \E_{\P}[f],
$$
where $\E_{\P_n}$ and $\E_{\P}$ denote the expectation with respect to $\P_n$ and $\P$, respectively.
\end{Definition}

 In particular, this means that the probability converges over open sets of the topology. Fix $(H,y)$ finite, then
\eqn{
\begin{split}
	B_R(h,y) & = \left\{(G,\emp)\in \G_\star \mbox{ : } d_{loc}((H,y),(G,\emp))\leq R\right\} \\
				 & = \left\{(G,\emp)\in \G_\star \mbox{ : } U_{\leq \lfloor 1/R\rfloor}(\emp) \cong (H,y) \right\}.
\end{split}
}
Elements in this open ball are determined by the neighborhood of the root up to distance  $ \lfloor 1/R\rfloor$. As a consequence,  the probability $\P_n$ of the ball $B_R(h,y)$ is given by 
$$
	\P_n(B_R(h,y)) = \frac{1}{n}\sum_{i\in[n]}\I\left\{U_{\leq \lfloor 1/R\rfloor}(i)\cong (H,y)\right\}.
$$
This implies that it suffices to look at the local structure of the neighborhood of a typical vertex to obtain the probability $\P_n$ of any open ball. We now state a criterion for a sequence of deterministic graphs to converge in the LW sense as in Definition \ref{def:LW:determin}:

\begin{Theorem}[Criterion for local weak convergence]
\label{th:crit:determ}
 Let $(G_n)_{n\in\N}$ be a sequence of graphs. Then $G_n$ converges in the local weak sense to $(G,\emp)$ with law $\P$ when, for every finite rooted graph $(H,y)$,
\eqn{
\label{for-critertion}
	\P_n(H) =\frac{1}{n}\sum_{i\in[n]} \I\left\{U_{\leq k}(i)\cong (H,y)\right\}\longrightarrow \P\left(U_{\leq k}(\emp)\cong (H,y)\right).
}
\end{Theorem}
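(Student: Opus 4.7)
The plan is to establish the criterion via standard weak-convergence machinery on the Polish space $(\G_\star,d_{loc})$, with the bulk of the work in the sufficiency direction. The necessity is immediate: for every finite rooted graph $(H,y)$ and every $k\in\N$, the map $g_{H,y,k}(G,\emp) = \I\{U_{\leq k}(\emp)\cong (H,y)\}$ is bounded and in fact locally constant on $\G_\star$, since by Definition \ref{def:locdist:und} any two points at local distance less than $1/(k+1)$ have the same $k$-neighborhood up to isomorphism. Applying Definition \ref{def:LW:determin} to $g_{H,y,k}$ then yields \eqref{for-critertion}.

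For sufficiency, fix a bounded continuous $f:\G_\star\to\R$ and let $T_k:\G_\star\to\G_\star$ be the truncation map $T_k(G,\emp) = (U_{\leq k}(\emp),\emp)$. Set $f_k := f\circ T_k$; by construction, $f_k$ depends only on the isomorphism class of $U_{\leq k}(\emp)$, hence takes at most countably many values indexed by the finite rooted graphs of depth at most $k$. Since $d_{loc}((G,\emp),T_k(G,\emp))\leq 1/(k+1)$, continuity of $f$ gives $f_k\to f$ pointwise on $\G_\star$. I would split the proof into three steps: (i) for each fixed $k$, $\E_{\P_n}[f_k]\to\E_{\P}[f_k]$ as $n\to\infty$; (ii) $\E_\P[f_k]\to\E_\P[f]$ as $k\to\infty$; (iii) $\sup_n|\E_{\P_n}[f]-\E_{\P_n}[f_k]|\to 0$ as $k\to\infty$. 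Step (i) reduces to atomwise convergence of the discrete pushforward measures $\P_n\circ T_k^{-1}\to\P\circ T_k^{-1}$, which is precisely what \eqref{for-critertion} asserts; Scheff\'e's lemma then upgrades this pointwise convergence of probability mass functions to total-variation convergence of the discrete measures, and integrating the bounded function $f$ against them yields (i). Step (ii) follows from $|f_k|\leq\|f\|_\infty$ and $\P$-almost-sure local finiteness of $(G,\emp)$, via dominated convergence.

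The main obstacle is step (iii), the uniformity in $n$: the space $\G_\star$ is not compact and a bounded continuous $f$ need not be uniformly continuous. I would resolve this by transferring tightness from $\P$ to the family $\{\P_n\}$. Since $\P$ is a probability measure on a Polish space it is tight, so for each $\varepsilon>0$ and all large enough $k$ there exists a finite family $\mathcal{H}$ of isomorphism classes of $k$-neighborhoods such that $\P(T_k^{-1}(\mathcal{H}))>1-\varepsilon$. Applying the criterion to each of the finitely many classes in $\mathcal{H}$ gives $\P_n(T_k^{-1}(\mathcal{H}))>1-2\varepsilon$ for all sufficiently large $n$. On the relatively compact set $T_k^{-1}(\mathcal{H})$ the continuous function $f$ is uniformly continuous, and the elementary bound $d_{loc}((G,\emp),T_k(G,\emp))\leq 1/(k+1)$ then controls $|f-f_k|$ uniformly there for $k$ large enough. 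Combining (i)--(iii) with a standard $\varepsilon/3$ argument yields $\E_{\P_n}[f]\to\E_\P[f]$ for every bounded continuous $f$, which is Definition \ref{def:LW:determin}.
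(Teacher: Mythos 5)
The paper itself does not prove Theorem \ref{th:crit:determ}; it defers to \cite[Section 1.4]{vdH2}, so I am judging your argument on its own terms. Your necessity direction and steps (i)--(ii) are correct: the indicators $\I\{U_{\leq k}(\emp)\cong (H,y)\}$ are locally constant (hence bounded continuous) because $d_{loc}<1/(1+k)$ forces the $k$-neighborhoods to agree; the pushforward $\P_n\circ T_k^{-1}$ is a probability mass function on the countable set of isomorphism classes of depth-$k$ rooted graphs; atomwise convergence is exactly \eqref{for-critertion}; and Scheff\'e upgrades this to total-variation convergence, which handles any bounded $f_k$. Step (ii) by dominated convergence is also fine.

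The genuine gap is in step (iii). The set $T_k^{-1}(\mathcal{H})$ is \emph{not} relatively compact in $(\G_\star,d_{loc})$: it constrains only the $k$-neighborhood of the root, and a sequence of rooted graphs sharing a fixed $k$-neighborhood but with, say, unboundedly growing degrees at distance $k+1$ has no convergent subsequence. (Relative compactness here requires the set of possible $r$-neighborhoods to be finite for \emph{every} $r$, not just $r\leq k$.) So $f$ need not be uniformly continuous on $T_k^{-1}(\mathcal{H})$, and the uniform control of $|f-f_k|$ there does not follow --- this is exactly the place where a merely continuous bounded $f$ can misbehave, so the argument does not close. Two standard repairs. (a) Reduce to bounded \emph{uniformly continuous} (or Lipschitz) $f$ via the Portmanteau theorem, for which $\sup_x|f(x)-f_k(x)|$ is bounded by the modulus of continuity of $f$ at scale $1/(k+1)$ and step (iii) becomes trivial; even more directly, the sets $\{U_{\leq k}(\emp)\cong(H,y)\}$ are clopen and form a countable $\pi$-system base of the separable metric topology, so convergence of $\P_n$ on them already implies weak convergence by a standard criterion. (b) Prove genuine tightness of the family $\{\P_n\}$: for each level $j$ choose a finite family $\mathcal{H}_j$ with $\P(T_j^{-1}(\mathcal{H}_j))>1-\varepsilon 2^{-j}$, transfer this to $\P_n$ using \eqref{for-critertion}, and intersect over all $j$ to obtain a single compact set $K$ with $\sup_n\P_n(K^{c})$ small; on $K$ the bound $\sup_{x\in K}|f(x)-f(T_kx)|\rightarrow 0$ follows by a compactness/contradiction argument (a sequence $x_k\in K$ with $|f(x_k)-f(T_kx_k)|\geq\eta$ would violate continuity along a convergent subsequence, since $T_kx_k$ and $x_k$ have the same limit). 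Either patch makes your skeleton correct.
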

The proof Theorem \ref{th:crit:determ} can be found in  \cite[Section 1.4]{vdH2}. Notice that for $(H,y)\in \G_\star$, the functions $\I\{U_{\leq k}(\emp)\cong (H,y)\}$ are continuous with respect to the local weak topology and uniquely identify the limit.

So far we have considered sequences of deterministic graphs. Whenever we consider a {\em random graph } $G_n$, we have two source of randomness. First, we have the randomness of the choice of the root, and then the randomness of the graph itself. For this reason, it is necessary to specify the randomness we take expectation with respect to, giving rise to different ways of convergence. We specify this in the following definition:

\begin{Definition}[Local weak convergence]
\label{def:LWC:und:random}
Consider a sequence of random graphs $(G_n)_{n\in\N}$, and a probability $\P$ on $\G_\star$. Denote by $\P_n$ the probability associated to $G_n$ as in \eqref{for:probP_n}. 
\begin{enumerate}
	\item We say that $G_n$ {\em converges in distribution in the local weak sense} to $\P$ if, for any bounded continuous function $f:\G_\star\rightarrow\R$,
	\eqn{
	\label{for:und:convdist}
		\E\left[\E_{\P_n}\left[f\right]\right]\longrightarrow \E_\P\left[f\right];
	}
	\item 
	We say that $G_n$ {\em converges in probability in the local weak sense} to $\P$ if, for any bounded continuous function $f:\G_\star\rightarrow\R$,
	\eqn{
	\label{for:und:convprob}
		\E_{\P_n}\left[f\right]\stackrel{\pr}{\longrightarrow} \E_\P\left[f\right];
	}
	\item 
	We say that $G_n$ {\em converges almost surely in the local weak sense} to $\P$ if, for any bounded continuous function $f:\G_\star\rightarrow\R$,
	\eqn{
	\label{for:und:conv-as}
		\E_{\P_n}\left[f\right]\stackrel{\pr-a.s.}{\longrightarrow} \E_\P\left[f\right].
	}
\end{enumerate}
\end{Definition}
Notice that the left-hand term in \eqref{for:und:convprob} is a random variable, while the right-hand side is deterministic. In fact, \eqref{for:und:convprob}  implies \eqref{for:und:convdist}, but the opposite is not true. Similarly, \eqref{for:und:conv-as} implies \eqref{for:und:convprob}.

Similarly to Theorem \ref{th:crit:determ}, we can give a criterion for the convergence of a sequence of random graphs:

\begin{Theorem}[Criterion for local weak convergence of random graphs]
\label{th:crit:rand}
 Let $(G_n)_{n\in\N}$
be a sequence random graphs. Let $(G, \emp)$ be a
random variable on $\G_\star$ having law $\P$. Then, $G_n$ converges to $(G,\emp)$ in distribution (in probability, almost surely) if \eqref{for:und:convdist} (\eqref{for:und:convprob}, \eqref{for:und:conv-as}, respectively) holds for every function of the type $\I\{U_{\leq k}(\emp)\cong (H,y)\}$, where $k\in\N$ and $(H,y)$ is a finite element of $\G_\star$.

\end{Theorem}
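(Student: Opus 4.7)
The plan is to reduce the convergence of $\E_{\P_n}[f]$ for every bounded continuous $f$ to convergence on the countable family of indicators $\I\{U_{\leq k}(\emp)\cong(H,y)\}$, via a portmanteau-type argument that is unusually clean here because these indicators are characteristic functions of \emph{clopen} sets. The key observation is that
$$
A_{H,y,k}:=\{(G,\emp)\in\G_\star\colon U_{\leq k}(\emp)\cong(H,y)\}
$$
coincides with the open ball $B_{1/(k+1)}((H,y))$ in $d_{loc}$, and since $d_{loc}$ takes values only in $\{0\}\cup\{1/(k+1):k\geq 1\}$, every such ball is both open and closed in $\G_\star$. In particular $A_{H,y,k}$ has empty topological boundary, so it is a continuity set for any Borel probability measure on $\G_\star$. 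For fixed $k$, the clopen sets $\{A_{H,y,k}\}_{(H,y)\text{ finite}}$ form a countable partition of $\G_\star$; as $k$ grows these partitions refine, and together they produce a countable basis of the topology.

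For part~(1), set $\mu_n:=\E[\P_n]$ and $\mu:=\P$; the hypothesis reads $\mu_n(A_{H,y,k})\to\mu(A_{H,y,k})$ for every $k$ and every finite $(H,y)$. Since finite unions and complements of clopen sets are again clopen, inclusion-exclusion lifts this to $\mu_n\big(\bigcup_{i=1}^N A_i\big)\to\mu\big(\bigcup_{i=1}^N A_i\big)$ for any finite union of basis sets. For an arbitrary open $U\subset\G_\star$, write $U=\bigcup_{i\geq 1}A_i$ as a countable union of basis elements; then
$$
\liminf_{n\to\infty}\mu_n(U)\;\geq\;\liminf_{n\to\infty}\mu_n\Big(\bigcup_{i=1}^N A_i\Big)\;=\;\mu\Big(\bigcup_{i=1}^N A_i\Big)\;\xrightarrow[N\to\infty]{}\;\mu(U).
$$
This is the portmanteau characterisation of weak convergence $\mu_n\to\mu$, equivalent to \eqref{for:und:convdist} for every bounded continuous $f$.

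For part~(3), exploit the countability of the family $\{(k,(H,y))\}$: the intersection of the probability-one events on which each individual $\P_n(A_{H,y,k})\to\P(A_{H,y,k})$ holds is still a probability-one event $\Omega^\star$ on which \emph{all} such convergences hold simultaneously. On $\Omega^\star$ the random measures $\P_n(\omega)$ become deterministic sequences satisfying the hypothesis of part~(1), so the portmanteau argument applied pathwise yields $\E_{\P_n}[f]\to\E_\P[f]$ for every bounded continuous $f$. Part~(2) follows by the standard subsequence principle: a real sequence $X_n\xrightarrow{\pr}c$ iff every subsequence admits a further subsequence along which $X_n\to c$ almost surely. Given any subsequence, a countable diagonal extraction produces a sub-subsequence along which every hypothesised convergence $\P_n(A_{H,y,k})\xrightarrow{\pr}\P(A_{H,y,k})$ becomes almost sure; the argument for part~(3) then supplies a.s.\ convergence of $\E_{\P_n}[f]$ on that sub-subsequence, which is what one needs.

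The only non-routine step is the portmanteau reduction: convergence on the countable basis $\{A_{H,y,k}\}$ must imply weak convergence of the associated measures. This step is unusually clean in this setting precisely because the basis consists of clopen sets, so no careful selection of ``good radii'' or extra approximation by $\mu$-continuity sets is needed; everything else (extraction of a single full-measure event from countably many, and the subsequence trick for convergence in probability) is standard once the deterministic criterion of part~(1) is in place.
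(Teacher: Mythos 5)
Your argument is correct and follows essentially the same route as the paper: the paper disposes of this theorem by reducing all three modes of convergence to the deterministic criterion of Theorem \ref{th:crit:determ} (pathwise for almost-sure convergence, via the subsequence principle for convergence in probability, and via the expected measure $\E[\P_n]$ for convergence in distribution), which is exactly what you do. The only difference is that you additionally supply a self-contained proof of the deterministic criterion itself --- the portmanteau argument on the countable clopen basis $\{A_{H,y,k}\}$, whose finite intersections are again basis sets or empty --- whereas the paper cites that step to the literature; your version of it is sound.
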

The proof of Theorem \ref{th:crit:rand} follows immediately from Theorem \ref{th:crit:determ}.

\subsection{Directed graphs}
\label{sec:LWC:dir}
The construction of local weak convergence for directed graphs is similar to the undirected case. It is necessary though to define an exploration process to construct the neighborhood of the root and keep track of in- and out-degrees of vertices. To keep notation as simple as possible, we use the same notation as in Section \ref{sec:LWC:und}, while here we refer to directed graphs. We start giving the definition of rooted marked directed graphs:

\begin{Definition}[Rooted marked directed graph]
\label{def:rootmarked}
Let $G$ be a directed graph with vertex set $V(G)$ and edge set $E(G)$. Let $\emp\in V(G)$ be a vertex called the {\em root}. Assume that for every $i\in V(G)$, the in-degree $\din_i$ and the out-degree $\dout_i$ of the vertex $i$ are finite.
Assign to every $i\in V(G)$ an integer value $\mout_i$ called a {\em mark}, such that $\dout_i\leq \mout_i<\infty$. Denote the set of marks by $M(G) = ( \mout_i)_{i\in V(G)}$. We call  the triplet $(G,\emp,M(G))$ a {\em rooted marked directed graph}.
\end{Definition}


To simplify notation in Definition \ref{def:rootmarked}, we will specify the marks only when necessary.
In simple words, a rooted marked directed graph is a locally finite directed graph where one of the vertices is marked as root, and to every vertex we assign a mark, which is larger than the out-degree of the vertex. If $\mout_i= \dout_i$  we keep $i$ intact, and if $\mout_i- \dout_i>0$ then we attach to $i$ exactly $\mout_i- \dout_i$ outgoing arrows pointing nowhere. This is illustrated in Figure~\ref{fig-markers}. We call a directed graph with marks, without specifying the root, a {\em marked graph}.
\begin{figure}[htb]
\begin{framed}
\centering
\includegraphics[width = 0.2 \textwidth]{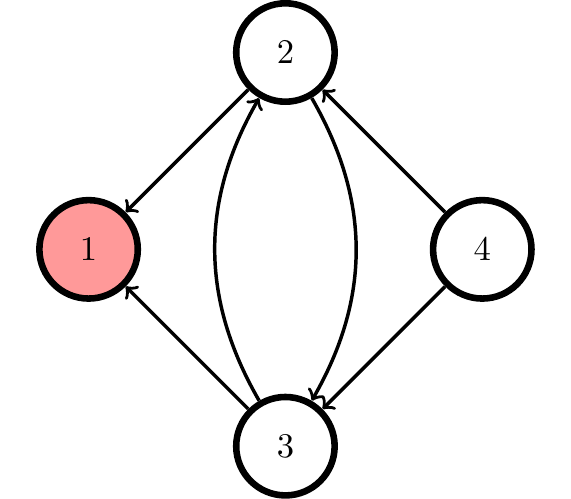}
\hspace{1cm}
\includegraphics[width = 0.2 \textwidth]{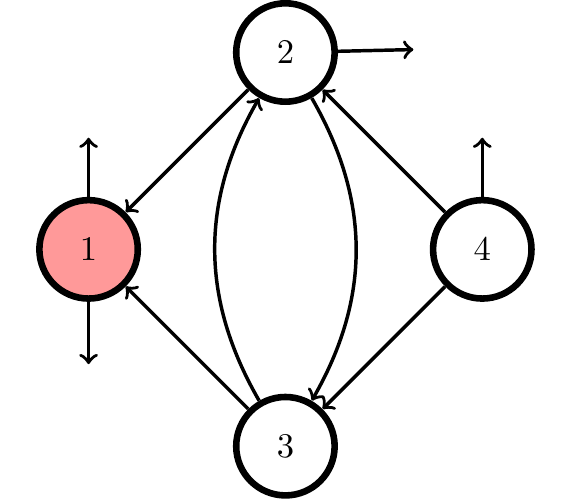}
\caption{Two examples of rooted marked directed graphs. The graph on the left is considered with marks equal to the out-degree, while in the example on the right we have assigned marks larger than the out-degree. The difference between the mark and the out-degree of a vertex can be visualized as the number of arrows starting at the vertex and pointing nowhere.}
\label{fig-markers}
\end{framed}
\end{figure}

Every directed graph can be seen as a rooted marked directed graph, with marks equal to the out-degrees and a root picked from the set of vertices. In what follows, sometimes we specify the marks, and sometimes we specify the out-degree and the number of edges pointing nowhere.

As in the undirected case, we are not interested in the precise labeling of the vertices. This leads us to define the notion of isomorphism, including the presence of marks:
\begin{Definition}[Isomorphism of rooted marked directed graphs]
\label{def:isomor:dir}
Two rooted marked directed graphs $(G,\emp,M(G))$ and $(G',\emp',M(G'))$ are {\em isomorphic} if and only if there exists a bijection $\gamma:V(G)\rightarrow V(G')$ such that
\begin{enumerate}
	\item $(i,j)\in E(G)$ if and only if $(\gamma(i),\gamma(j))\in E(G')$;
	\item $\gamma(\emp) = \emp'$;
	\item for every $i\in V(G)$, $\mout_i = \mout_{\gamma(i)}$.
\end{enumerate}
We write $(G,\emp,M(G))\cong (G',\emp',M(G'))$ to denote that $(G,\emp,M(G))$ and $(G',\emp',M(G'))$ are isomorphic rooted marked directed graphs.
\end{Definition}

Denote the space of rooted marked directed graphs by $\G_\star$, which is again a quotient space with respect to the equivalence given by isomorphisms. 
We now define the exploration process that identifies the neighborhood of the root, see Figure~\ref{fig:root:neighborhood} for an example.
\begin{figure}[htb]
\begin{framed}
\centering
\includegraphics[width = 0.25 \textwidth]{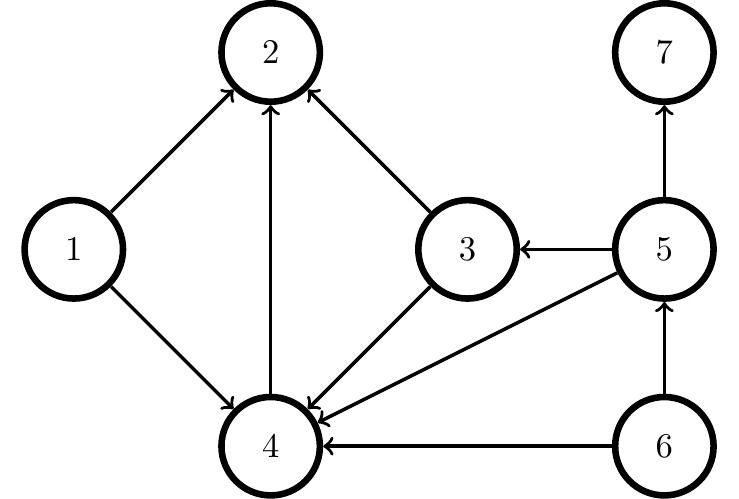} \\
\includegraphics[width = 0.2 \textwidth]{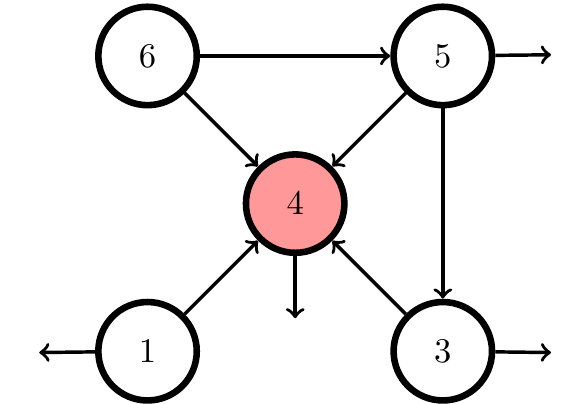}
\hspace{5cm}
\includegraphics[width = 0.2 \textwidth]{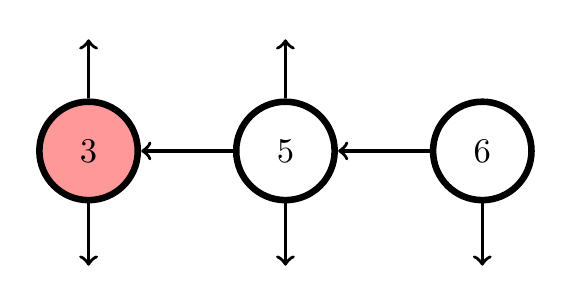}
\caption{Example of two root neighborhoods in the same graph above, where we have assigned marks equal to the out-degrees, with a different choice of the root. The root on the left is vertex 4, and vertex 3 on the right. We explore the root neighborhood up to the maximum possible distance. Notice that the graph is only partially explored in this example.}
\label{fig:root:neighborhood}
\end{framed}
\end{figure}
\begin{Definition}[Root neighborhood]
\label{def:rootneigh:dir}
Consider a rooted marked directed graph $(G,\emp,M(G))$. Fix $k\in \N$. The {\em $k$-neighborhood of  root $\emp$} is a rooted marked directed graph $(U_{\leq k}(\emp),\emp,M(U_{\leq k}(\emp)))$ constructed as follows:
\begin{itemize}
	\item[{$\rhd$}] for $k=0$, $U_{\leq  k}(\emp)$ is a graph with a single vertex $\emp$, no edges, and mark $\mout_{\emp}$;
	\item[{$\rhd$}] for $k>0$, consider $\emp$ as active, and proceed recursively as follows, for $h=1,\ldots,k$:
		\begin{enumerate}
			\item for every vertex active at step $h-1$, explore the incoming edges to the vertices in the opposite direction, finding the source of the edges;
			\item label the vertices that were active to be explored, and label the vertices just found as active, but only if they were not already found in the exploration process;
			\item for every vertex $i$ (explored or active), assign the mark  $\mout_i$ to it, that is equal to the mark in the original graph $(G,\emp,M(G))$. In addition, draw every edge between two vertices that are already found in the exploration process;
			\item if there are no more active vertices, then stop the process.
		\end{enumerate}
\end{itemize}
\end{Definition}

In this way we explore the {\em incoming neighborhood} of the root. As stated in Definition \ref{def:rootneigh:dir}, we explore edges in the opposite direction: if $(j,i)\in E(G)$ is a directed edge, then the exploration process goes from vertex $i$ to vertex $j$. Notice that it is possible that we do not explore the entire graph in this process, because we do not explore edges in all directions. This is different to the undirected case, where, for $k$ large enough, we always explore the entire graph (if connected).

We can define a local distance $d_{loc}$ on $\G_\star$ as in Definition \ref{def:locdist:und}, but this time for rooted marked directed graphs, using Definitions \ref{def:isomor:dir} and \ref{def:rootneigh:dir}. 
As in the undirected setting, the function $d_{loc}$ tells us up to what distance the neighborhoods of two roots in two different rooted marked directed graphs are isomorphic. However, in the directed setting the function $d_{loc}$ is {\it not} a metric on $\G_\star$, but it is a {\em pseudonorm}. 

Note that $d_{loc}$ is positive by definition, and obviously symmetric. It is not hard to prove that it satisfies the triangle inequality. The reason that $d_{loc}$ is not a metric is that two rooted marked directed graphs can be at distance 0 without being isomorphic. This is due to the fact that the edges can be explored only in one direction, possibly leaving parts of the graph unexplored, as mentioned above. If the {\it explorable parts} or {\em incoming neighborhoods} of two graphs from the roots are isomorphic, then the two rooted marked directed graphs are at distance zero, while these graphs still might not be isomorphic. An example is given in Figure~\ref{fig:pseudometric}.
\begin{figure}[h]
\begin{framed}
\centering
\includegraphics[width = 0.25 \textwidth]{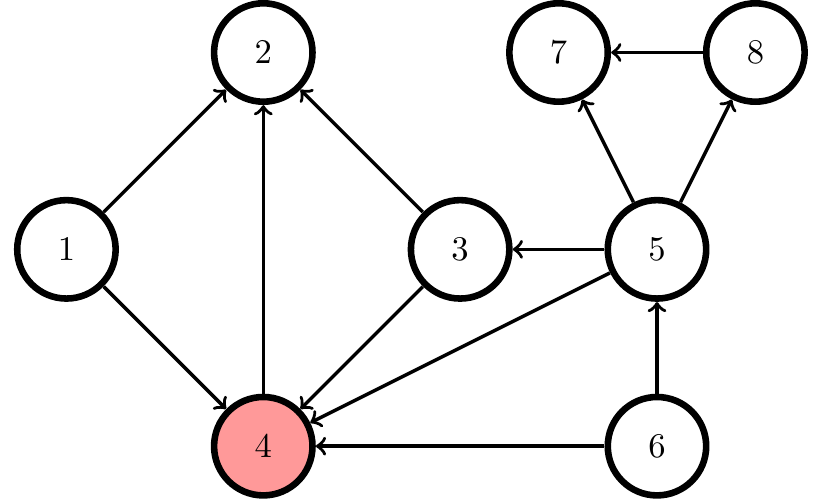}
\hspace{1cm}
\includegraphics[width = 0.25\textwidth]{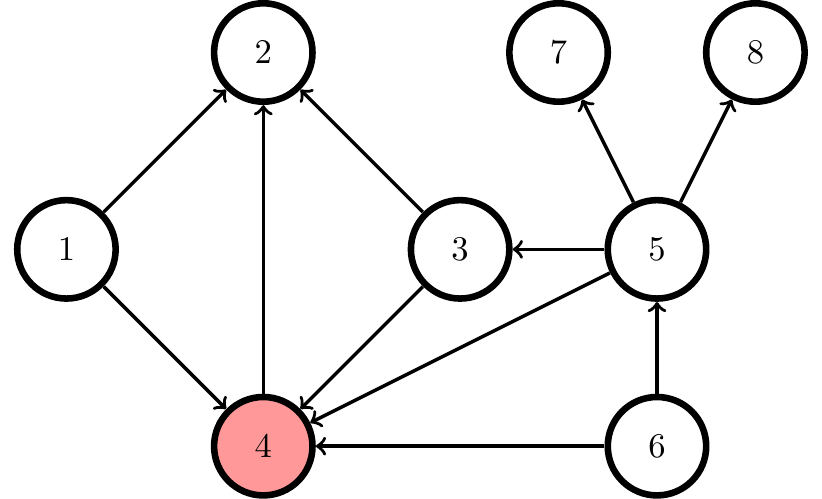}
\caption{Example of two rooted marked directed graphs that are at distance zero, but are not isomorphic. The distance between the two graphs is zero since the explorable parts of the graphs from vertex 4 (including vertices 1--6) are isomorphic, but there exists no isomorphisms between the two graphs.} 
\label{fig:pseudometric}
\end{framed}
\end{figure}
Denote the explorable neighborhood of the root by $U_\infty(\emp)$, i.e., the (possibly infinite) subgraph of a  rooted marked directed graph that can be explored from the root. Then
\eqn{
	d_{loc}((G_1,\emp_1, M(G_1)),(G_2,\emp_2, M(G_2))) = 0\quad \Longleftrightarrow \quad U_\infty(\emp_1) \cong U_\infty(\emp_2).
}
Formally, $(\G_\star,d_{loc})$ is a complete and separable space, so every Cauchy sequence has a limiting point. Although the limiting point might not be unique, the explorable neighborhood of the root is unique. The proof that the space $(\G_\star,d_{loc})$ is a complete pseudometric space is a minor adaptation of the proofs in  \cite[Appendix A]{Gabry}. 

We can define the space $\tilde{\G}_\star$ as the quotient space of $\G_\star$ using  the equivalence relation $\sim_\star$, where 
$$
	(G_1,\emp_1, M(G_1)) \sim_\star (G_2,\emp_2,M(G_2)) \quad \Leftrightarrow \quad d_{loc}\left((G_1,\emp_1,M(G_1)) , (G_2,\emp_2,M(G_2))\right)=0.
$$
On $\tilde{\G}_\star$, $d_{loc}$ is a metric. 
Any equivalence class in $\tilde{\G}_\star$ is composed by directed marked rooted graphs whose neighborhoods of the root are isomorphic. Heuristically, everything that is in the part of the graph that is not explorable from the root {\em does not have any influence on the incoming neighborhood of the root}. This means that any function on $\tilde{\G}_\star$ is well defined if and only if it is a function of the incoming neighborhood of the root.

As in the undirected sense, we denote 
\eqn{
\label{for:PG:dir}
	\P(G) = \frac{1}{|V(G)|}\sum_{i\in V(G)} \delta_{(G,i,M(G))}.
}
When we consider a sequence of marked graphs $((G_n,M(G_n)))_{n\in\N}$, we denote $\P(G_n)$  by $\P_n$. From the definition, we have that $\P(G)$ is  a probability on $\tilde{\G}_\star$, that assigns a uniformly chosen root to the marked directed finite graph. Notice that the mark set is fixed. In fact, the triplet $(G,i,M(G))$ is mapped to the equivalence class of the explorable neighborhood $U_{\infty}(i)$  of $i$ in $G$ with the same set of marks.

Since we are interested in sequences of random graphs, we give the definition of LW convergence only for random graphs:

\begin{Definition}[Local weak convergence - directed]
\label{def:LWC:dir}
Consider a sequence of random marked directed graphs $(G_n,M(G_n))_{n\in\N}$.  Let $(G,\emp,M(G))$ be a random element of $\tilde{\G}_\star$ with law $\P$. We say that $G_n$ converges {\em in distribution} ({\em in probability, almost surely}) to $\P$ if \eqref{for:und:convdist} (\eqref{for:und:convprob}, \eqref{for:und:conv-as} respectively) holds for any bounded continuous function $f:\tilde{\G}_\star\rightarrow \R$. 
\end{Definition}

\begin{Remark}[Criterion for directed LW convergence]
\label{rem:crit:Dlwc}
{\rm
The reader can observe that, once the notion of exploration process and isomorphisms in the directed case are introduced, the construction of the definition of local weak convergence for directed graphs is the same as in the undirected case. With the presence of marks we are able to keep track of the out-degrees of vertices, while we explore the incoming edges. 
	
It is easy to prove that Theorem \ref{th:crit:rand} can be extended to random marked directed graphs. In other words, it is sufficient to prove the convergence for functions of the type $\I\{U_{\leq k}(\emp)\cong (H,y,M(H))\}$, where $k\in\N$ and $(H,y,M(H))$ is a finite marked directed rooted graph.
}
\end{Remark}

\section{Convergence of PageRank}
\label{sec:pagerank}
The main result on PageRank is Theorem \ref{th:pagerank}. It states that, for a locally weakly convergent sequence of directed random graphs $(G_n)_{n\in\N}$, there exists a random variable $R_\emp$ such that the PageRank value of a uniformly chosen vertex $R_{V_n}(n)$ satisfies
$$
	R_{V_n}(n)\stackrel{d}{\displaystyle \longrightarrow} R_\emp.
$$
The random variable $R_\emp$ is defined in Proposition \ref{prop:existence:Remp} below. Notice that, even though local weak convergence is defined in terms of {\em local properties} of the graph, it is sufficient for the existence of the limiting distribution for a global property such as PageRank.

 The existence of $R_\emp$ for a sequence $(G_n)_{n\in\N}$ is assured by the convergence in {\em distribution} in the local weak sense. If $(G_n)_{n\in\N}$ converges in probability (or almost surely), then the fraction of vertices whose PageRank value exceeds a fixed value $r>0$ converges in probability (or almost surely) to a deterministic value.

\subsection{Finite approximation of PageRank}
Consider a directed graph $G_n$, and define the matrix $\sub{Q}(n)$, where $\sub{Q}(n)_{i,j} = e_{i,j}/\dout_i$, for $e_{i,j}$ the number of directed edges from $i$ to $j$.
 For $c\in(0,1]$, 
the PageRank vector $\sub{\pi}(n) = (\pi_1,\ldots,\pi_n)$ is the unique solution of
\eqn{
\label{for:rank:definition}
	\sub{\pi}(n) = \sub{\pi}(n)\left[c\sub{Q}(n)\right]+\frac{1-c}{n}\sub{1}_n \quad \mbox{ and} \quad \sum_{i=1}^n \pi_i = 1,
}
where $c\in(0,1)$ and $\sub{1}_n$ is the vector of all ones of size $n$. We are interested in the graph-normalized version of PageRank, so $\sub{R}(n) = n\sub{\pi}(n)$, which is just the PageRank vector rescaled with the size of the graph. The vector $\sub{R}(n)$ satisfies
\eqn{
\label{for:scalefree:eq}
	\sub{R}(n) = \sub{R}(n)\left[c\sub{Q}(n)\right] +(1-c)\sub{1}_{n}.
}
Denote $\Id_n$ the identity matrix of size $n$. We can solve \eqref{for:scalefree:eq} to obtain the well-known expression \cite{Andersen,Avrac,Bianchini,Litvak2}
\eqn{
\label{for:pgrnk:system}
	\sub{R}(n) = (1-c)\sub{1}_{n}\left[\Id_n-c\sub{Q}(n)\right]^{-1}.
}
In practice, the inversion operation on the matrix $\Id_n-c\sub{Q}(n)$ is inefficient, therefore, power expansion is used to approximate the matrix in \eqref{for:pgrnk:system} (see e.g. \cite{Andersen}), as 
\eqn{
\label{for:powseries}
	\left[\Id_n-c\sub{Q}(n)\right]^{-1} = \sum_{k=0}^{\infty}c^k\sub{Q}(n)^k.
}
Notice that $\sub{Q}(n)^k_{i,j}>0$ if and only if there exists a path of length exactly $k$ from $i$ to $j$, possibly with repetition of vertices and edges. Define, for $k\in\N$, 
$$
	\mathrm{path}_i(k) = \left\{\mbox{directed path $ \ell =(\ell_0, \ell_1,\ell_2, \ldots, \ell_k=i)$}\right\}.
$$ 
With this notation, we can write, for $i\in[n]$, 
\eqn{
\label{for:rank:express}
	R_i(n) = (1-c)\left(1+\sum_{k=1}^\infty c^k\sum_{\ell\in \mathrm{path}_i(n)}\prod_{h=1}^k \frac{e_{\ell_{h},\ell_{h+1}}}{d^+_{\ell_h}}\right),
}
while the $N$th finite approximation of PageRank is
\eqn{
\label{for:rank:express:N}
	R^{\sss(N)}_i(n) = \left(1-c\right)\left(1+\sum_{k=0}^N c^k\sum_{\ell\in \mathrm{path}_i(n)}\prod_{h=1}^k \frac{e_{\ell_{h},\ell_{h+1}}}{d^+_{\ell_h}}
	\right).
}
Heuristically, the PageRank formulation in \eqref{for:rank:express} includes paths of every length, while the $N$th approximation in \eqref{for:rank:express:N} discards the paths of length $N+1$ or higher. In particular, for every $i\in[n]$,  $R_i^{\sss(N)}(n)\uparrow R_i(n)$. One can write the difference between the PageRank and its finite approximation as
\eqn{
\label{for:diffRank:Ps}
	\left| R_i(n) -R^{\sss(N)}_i(n)\right| = (1-c)\sub{1}_n\sum_{k=N+1}^\infty (c \sub{Q}(n))^k_i.
}
We can prove that we can approximate the PageRank value of a randomly chosen vertex by a finite approximation with an exponentially small error, that is independent of the size of the graph:
\begin{Lemma}[Finite iterations]
\label{lem:finite:iteration}
Consider a directed graph $G_n$ and denote a uniformly chosen vertex by $V_n$. Then,
$$
	\E\left[ R_{V_n}(n) -R^{\sss(N)}_{V_n}(n)\right] \leq c^{N+1},
$$
where the bound is independent of $n$.
\end{Lemma}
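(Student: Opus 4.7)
The plan is to start from the exact difference formula \eqref{for:diffRank:Ps} and simply sum it over all vertices, then exploit the substochastic nature of $\sub{Q}(n)$ together with the geometric series in $c$. Concretely, since $V_n$ is uniform on $[n]$, the quantity we need to bound is
\[
    \E\!\left[R_{V_n}(n)-R_{V_n}^{\sss(N)}(n)\right]=\frac{1}{n}\sum_{i\in[n]}\bigl(R_i(n)-R_i^{\sss(N)}(n)\bigr),
\]
and by \eqref{for:rank:express}--\eqref{for:rank:express:N} the summand is non-negative, so no absolute values are needed. Summing over $i$ in \eqref{for:diffRank:Ps} and interchanging the two sums gives
\[
    \sum_{i\in[n]}\bigl(R_i(n)-R_i^{\sss(N)}(n)\bigr)=(1-c)\sum_{k=N+1}^{\infty}c^{k}\,\sub{1}_n\,\sub{Q}(n)^{k}\,\sub{1}_n^{\mathsf T}.
\]

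The key observation is that $\sub{Q}(n)$ is \emph{substochastic}: for each $i$ with $\dout_i>0$ one has $\sum_j \sub{Q}(n)_{i,j}=\sum_j e_{i,j}/\dout_i=1$, while for dangling vertices ($\dout_i=0$) the $i$-th row of $\sub{Q}(n)$ is zero by convention. Hence $\sub{Q}(n)\,\sub{1}_n^{\mathsf T}\le \sub{1}_n^{\mathsf T}$ componentwise, and by iteration $\sub{Q}(n)^{k}\,\sub{1}_n^{\mathsf T}\le \sub{1}_n^{\mathsf T}$ componentwise for every $k\ge 0$. Pairing with $\sub{1}_n$ from the left therefore yields $\sub{1}_n\,\sub{Q}(n)^{k}\,\sub{1}_n^{\mathsf T}\le n$ uniformly in $k$.

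Plugging this bound in and summing the geometric series,
\[
    \sum_{i\in[n]}\bigl(R_i(n)-R_i^{\sss(N)}(n)\bigr)\le(1-c)\,n\sum_{k=N+1}^{\infty}c^{k}=(1-c)\,n\cdot\frac{c^{N+1}}{1-c}=n\,c^{N+1}.
\]
Dividing by $n$ gives the claimed inequality, with a constant independent of $n$ and of the graph structure.

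There is essentially no hard step: the main (very mild) obstacle is simply to check that the power-series identity \eqref{for:powseries} together with the substochasticity bound applies even in the presence of dangling vertices, which it does because rows of $\sub{Q}(n)$ corresponding to $\dout_i=0$ are zero and thus certainly bounded by $1$. Everything else is the interchange of summations and the evaluation of a geometric series in $c$.
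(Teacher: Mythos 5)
Your proposal is correct and follows essentially the same route as the paper's proof: both start from the exact formula \eqref{for:diffRank:Ps}, average over the uniform vertex, use the (sub)stochasticity of $\sub{Q}(n)$ to bound the total mass of $\sub{Q}(n)^k$ by $n$, and sum the geometric series to get $c^{N+1}$. The only addition is your explicit remark about dangling vertices, which the paper leaves implicit.
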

\begin{proof}
Consider \eqref{for:diffRank:Ps} for a uniformly chosen vertex. We have 
\eqn{
\label{for:lemfinite:1}
	\E\left[R_{V_n}(n) -R^{\sss(N)}_{V_n}(n) \right]= \frac{1-c}{n} \sum_{i=1}^n\sum_{k=N+1}^\infty \left[\sub{1}_n(c \sub{Q}(n))^k\right]_i.
}
We write $\sub{Q}(n)^k_{j,i}$ to denote the element $(j,i)$ of the matrix $\sub{Q}(n)^k$. We write
\eqn{
\label{for:lemfinite:2}
	\left[\sub{1}_n(c\sub{Q}(n))^k\right]_i = c^k\sum_{j=1}^n  \sub{Q}(n)^k_{j,i}.
}
Substituting \eqref{for:lemfinite:2} in \eqref{for:lemfinite:1}, we obtain
$$
	\E\left[R_{V_n}(n) -R^{\sss(N)}_{V_n}(n)\right] = (1-c)\sum_{k=N+1}^\infty c^k \frac{1}{n}\sum_{i,j} \sub{Q}(n)^k_{j,i}.
$$
Since $\sub{Q}(n)^k$ is a (sub)stochastic matrix, 
$$
	\sum_{i=1}^n \sub{Q}(n)^k_{j,i} \leq  1
$$
for every $j\in[n]$. It follows that 
$$
	\E\left[R_{V_n}(n) -R^{\sss(N)}_{V_n}(n)\right] \leq (1-c)\sum_{k=N+1}^\infty c^k \frac{1}{n}\sum_{i=1}^n1 = (1-c)\sum_{k=N+1}^\infty c^k = c^{N+1}.
$$
\end{proof}
Lemma \ref{lem:finite:iteration} means that we can approximate the PageRank value of a uniformly chosen vertex with an arbitrary precision in a finite number of iterations, that is independent of the graph size. This is the starting point of our analysis. 

\subsection{PageRank on marked directed graphs}
In this section we show how the graph-normalized version of PageRank of a uniformly chosen vertex in a sequence of directed graphs $(G_n)_{n\in\N}$ admits a limiting distribution whenever $G_n$ converges in the local weak sense to a distribution $\P$. The advantage is that such a limiting distribution is expressed in terms of functions of $\P$.

The first step is to write PageRank as functions of marked directed rooted graphs that are {\em bounded and continuous} with respect to the topology given by $d_{loc}$. In this way, by the definition of local weak convergence, we can pass to the limit and find the limiting distribution.

Fix $n\in\N$. Consider a marked rooted directed graph $(G,\emp, M(G))\in \G_\star$ of size $n$. Denote as before, for $k\in\N$, 
$$
	\mathrm{path}_\emp(k) = \left\{\mbox{directed paths $ \ell =(\ell_0, \ell_1,\ell_2, \ldots, \ell_k=\emp)$}\right\},
$$
i.e., the set of directed paths in $(G,\emp,M(G))$ of length exactly $k+1$ whose endpoint is the root $\emp$. It is clear that this set is completely determined by $U_{\leq k}(\emp)$ in $(G,\emp,M(G))$. 

Consider a directed marked graph $(G_n, M(G_n))$, where we consider marks equal to the out-degrees. We have that
\eqn{
\label{for:Nrank:rooted}
\begin{split}
	R^{\sss(N)}_{V_n}(n)
	& = \sum_{i\in[n]} \I_{\{V_n = i\}}(1-c)\left(1+\sum_{k=1}^N \sum_{\pi\in \mathrm{path}_i(k)} \prod_{h=1}^k c\frac{e_{\pi_{h},\pi_{h+1}}}{\dout_{\pi_h}}\right)\\
	& =: R^{\sss(N)}[(G_n,V_n,M(G_n))],
\end{split}
}
where the last term in \eqref{for:Nrank:rooted} is a function of a marked rooted graph, evaluated on $(G_n,V_n,M(G_n))$, with $V_n$ a uniformly chosen root. In particular, we can see the $N$th approximation of PageRank as a function of the marked rooted graph.  We call the function $ R^{\sss(N)}:\tilde{\G}_\star\rightarrow \R$ the {\em root $N$-PageRank}.

Clearly, the root $N$-PageRank $R^{\sss(N)}$ is a function of $U_{\leq N}(\emp)$ only. It depends in fact on the vertices, edges and marks that are considered when exploring the graph from the root up to distance $N$.  Notice that, since the dependence on the marked  directed rooted graph is given only by $U_{\leq k}(\emp)$, the function $R^{\sss(N)}$ is well defined on any equivalence class in $\tilde{\G}_\star$.

In addition, the function $R^{\sss(N)}$ is {\em continuous} with respect to the topology generated by $d_{loc}$. In fact, since $R^{\sss(N)}$ depends only on the root neighborhood up to distance $N$, whenever two elements $(G,\emp,M(G))$ and $(G',\emp',M(G'))$ are at distance less than $1/(1+N)$, their roots neighborhoods are isomorphic up to distance $N+1$, which implies that $R^{\sss(N)}[(G,\emp,M(G))] = R^{\sss(N)}[(G',\emp',M(G'))]$. 

The problem is that $R^{\sss(N)}$ {\em is not bounded}, so LWC does not assure that we can pass to the limit. To resolve this, we introduce a different type of function:

\begin{Definition}[Root $N$-PageRank tail]
\label{def:root:rank:tail}
Fix $N\in\N$. For $r>0$, define $\Psi_{r,N} :\tilde{\G}_\star\rightarrow\{0,1\}$ by 
$$
	\Psi_{r,N} \left[(G,\emp,M(G))\right] := \I\left\{ R^{\sss(N)}\left[(G,\emp,M(G))\right]>r\right\}.
$$
We call the function $\Psi_{r,N}$ the {\em root $N$-PageRank tail} at $r$.
\end{Definition}

The function $\Psi_{r,N}$ is clearly bounded, and  it depends only on the neighborhood of the root $\emp$ up to distance $N$ through the function $R^{\sss(N)}$. This means that, for any $r>0$, $\Psi_{r,N}$ is continuous. 

Since the root $N$-PageRank on $\tilde{\G}_\star$ represents the $N$th approximation of PageRank on directed graphs, it follows that
\eqn{
\label{for:pagerank:rooted:1}
	\E_{\P_n}[\Psi_{r,N}]= \frac{1}{n}\sum_{i\in[n]}\I\left\{R^{\sss(N)}_{i}(n)>r\right\},
}
i.e., $\E_{\P_n}[\Psi_{r,N}]$ is the empirical fraction of vertices in $G$ such that the $N$th approximation of PageRank exceeds $r$.
In particular, for every $r\geq0$, if $G_n\rightarrow \P$ in distribution, 
\eqn{
\label{for:con:dist:rank}
	\pr\left(R^{\sss(N)}_{V_n}(n)>r\right) = \E\left[\frac{1}{n}\sum_{i\in[n]}\I\left\{R^{\sss(N)}_{i}(n)>r\right\}\right]  \longrightarrow \P\left(R^{\sss(N)}_\emp \geq r\right),
}
while for convergence in probability (or almost surely), the limit in \eqref{for:con:dist:rank} exists in probability (or almost surely). Consider the sequence of random variables $(R^{\sss(N)}_\emp)_{N\in\N}$, where
$$
	R^{\sss(N)}_\emp := R^{\sss(N)}[(G,\emp,M(G))],
$$
where $(G,\emp,M(G))$ is a random directed rooted graph with law $\P$. From \eqref{for:con:dist:rank}, it follows that $R^{\sss(N)}_{V_n}(n)\rightarrow R^{\sss(N)}_\emp$ in distribution.

We have just proved that, for a sequence of directed graphs $(G_n)_{n\in\N}$ that converges locally weakly to $\P$, any finite approximation of the PageRank value of a uniformly chosen vertex converges in distribution to a limiting random variable, which is given by a function of $\P$. 

\subsection{The limit of finite root ranks}
Assume that the sequence $(G_n)_{n\in\N}$ of directed graphs converges to a directed rooted marked graph $(G,\emp,M(G))$ with law $\P$. In principle, such limiting $(G,\emp,M(G))$ can be an infinite directed rooted marked graph. Because of this, we cannot simply take the limit as $N\rightarrow\infty$ of the sequence $(R^{\sss(N)}_\emp)_{N\in\N}$, where $\emp$ is the root of $(G,\emp,M(G))$, because the PageRank algorithm is not defined on an infinite graph. Nevertheless, if $\P$ is a LW limit of some sequence of directed graphs, it admits a such limit:

\begin{Proposition}[Existence of limiting root rank]
\label{prop:existence:Remp}
Let $\P$ be a probability on $\tilde{\G}_\star$. If $\P$ is the LW limit in distribution of a sequence of marked directed graphs $(G_n)_{n\in\N}$,  then there exists a random variable $R_\emp$ with $\E_{\P}[R_\emp]\leq 1$, such that  $\P$-a.s. $R^{\sss(N)}_\emp\rightarrow R_\emp$. As a consequence, $\P(R_\emp<\infty)=1$.
\end{Proposition}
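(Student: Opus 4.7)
My plan is to (i) construct $R_\emp$ as the $\P$-almost sure monotone limit of the finite approximations $R^{\sss(N)}_\emp$, (ii) obtain a uniform bound $\E_\P[R^{\sss(N)}_\emp]\leq 1$ by probing LW convergence through the bounded continuous family $(\Psi_{r,N})_{r>0}$ combined with Fatou's lemma, and (iii) pass to the limit in $N$ by monotone convergence to conclude $\E_\P[R_\emp]\leq 1$, and hence $R_\emp<\infty$ $\P$-a.s.

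\emph{Step 1: monotone limit.} From \eqref{for:rank:express:N}, I observe that
\[
R^{\sss(N+1)}_\emp - R^{\sss(N)}_\emp \;=\; (1-c)\, c^{N+1} \sum_{\ell\in \mathrm{path}_\emp(N+1)} \prod_{h=1}^{N+1}\frac{e_{\ell_h,\ell_{h+1}}}{\dout_{\ell_h}}\;\geq\;0,
\]
where the sum is well-defined and finite because $U_{\leq N+1}(\emp)$ is a finite rooted marked directed graph by the local finiteness of $G$. Hence $(R^{\sss(N)}_\emp)_{N\in\N}$ is $\P$-almost surely non-decreasing, and $R_\emp := \lim_{N\to\infty} R^{\sss(N)}_\emp$ exists $\P$-a.s.\ in $[1-c,\infty]$.

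\emph{Step 2: uniform $L^1$-bound via LW convergence.} Summing \eqref{for:scalefree:eq} over $i$ and using the (sub)stochasticity of $\sub{Q}(n)$ already invoked in the proof of Lemma \ref{lem:finite:iteration}, I get $\sum_{i\in[n]}R_i(n)\leq n$, so that $\E[R_{V_n}(n)]\leq 1$ and therefore $\E[R^{\sss(N)}_{V_n}(n)]\leq 1$ for every $n,N\in\N$. Since $\Psi_{r,N}$ is bounded and continuous on $\tilde{\G}_\star$ for every $r>0$ (as noted right after Definition \ref{def:root:rank:tail}, because $R^{\sss(N)}$ is locally constant on $\tilde{\G}_\star$), LW convergence in distribution \eqref{for:und:convdist} yields, for every $r>0$,
\[
\pr\bigl(R^{\sss(N)}_{V_n}(n) > r\bigr) \;=\; \E\bigl[\E_{\P_n}[\Psi_{r,N}]\bigr] \;\longrightarrow\; \E_\P[\Psi_{r,N}] \;=\; \P\bigl(R^{\sss(N)}_\emp > r\bigr).
\]
The layer-cake representation combined with Fatou's lemma applied in $r$ then gives
\[
\E_\P\bigl[R^{\sss(N)}_\emp\bigr] \;=\; \int_0^\infty \P\bigl(R^{\sss(N)}_\emp>r\bigr)\,dr \;\leq\; \liminf_{n\to\infty}\int_0^\infty \pr\bigl(R^{\sss(N)}_{V_n}(n)>r\bigr)\,dr \;=\; \liminf_{n\to\infty}\E\bigl[R^{\sss(N)}_{V_n}(n)\bigr] \;\leq\; 1.
\]

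\emph{Step 3: conclusion and main obstacle.} Applying the monotone convergence theorem on the limit object, $\E_\P[R_\emp]=\lim_N \E_\P[R^{\sss(N)}_\emp]\leq 1$, which forces $R_\emp<\infty$ $\P$-a.s. The main obstacle is precisely Step 2: LW convergence in distribution only transfers expectations of \emph{bounded} continuous functionals, while the $R^{\sss(N)}_\emp$ are unbounded and a priori could exhibit mass escape to infinity. The resolution is to use the bounded continuous family $(\Psi_{r,N})_{r>0}$ as a probe and reassemble the expectation through the layer-cake identity and Fatou, which yields the inequality for the limit without requiring any uniform integrability for the sequence $(R^{\sss(N)}_{V_n}(n))_{n\in\N}$.
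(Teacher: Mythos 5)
Your proposal is correct and follows essentially the same route as the paper: monotonicity of $(R^{\sss(N)}_\emp)_{N\in\N}$ gives the a.s.\ limit, the uniform bound $\E_\P[R^{\sss(N)}_\emp]\leq\liminf_n\E[R^{\sss(N)}_{V_n}(n)]\leq 1$ comes from LW convergence plus Fatou, and monotone convergence finishes the argument. The only difference is cosmetic: the paper invokes ``Fatou's lemma'' for the weakly convergent sequence $R^{\sss(N)}_{V_n}(n)\stackrel{d}{\to}R^{\sss(N)}_\emp$ directly, whereas you unpack that step via the layer-cake identity and the probes $\Psi_{r,N}$, which is a fair (and slightly more self-contained) justification of the same inequality.
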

\begin{proof}
Clearly, the sequence $(R^{\sss(N)}_\emp)_{N\in\N}$ is $\P$-a.s. increasing. Therefore,  the almost sure limit $R_\emp = \lim_{N\rightarrow\infty}R^{\sss(N)}_\emp$  exists. This is independent of the fact that $\P$ is a LW limit. 

By  LW convergence, we know that $R^{\sss(N)}_{V_n}(n)\rightarrow R^{\sss(N)}_\emp$ in distribution. For every $N\in\N$, by Fatou's Lemma we can bound
$$
	\E_{\P}\left[R^{\sss(N)}_\emp\right] \leq \liminf_{n\in\N}\E\left[R^{\sss(N)}_{V_n}(n)\right]\leq \liminf_{n\in\N}\E\left[R_{V_n}(n)\right] = 1,
$$
where the second bound comes from the fact that any $N$-finite approximation of PageRank is less than the actual PageRank value, and the fact that the graph-normalized PageRank has expected value 1.  Since $(R^{\sss(N)}_\emp)_{N\in\N}$  is increasing, we conclude that there exists $z\leq 1$ such that
$$
	\E_\P\left[R_\emp\right] = \lim_{N\rightarrow \infty }\E_\P\left[R^{\sss(N)}_\emp\right] =z.
$$
\end{proof}

\subsection{Proof of Theorem \ref{th:pagerank}}
We start with implication (a) of Theorem \ref{th:pagerank}. We want to prove that $R_{V_n}(n)$ converges to $R_\emp$ in distribution. So, for every $r\geq0$ and $\varepsilon>0$ there exists $M(\varepsilon)\in\N$ such that, for every $n\geq M(\varepsilon)$, 
\eqn{
\label{for:pgrnk:conv:1}
	\left|\pr\left(R_{V_n}(n)>r\right)-\P\left(R_\emp>r\right)\right|\leq \varepsilon.
}
We can write, using the triangle inequality,
\eqn{
\label{for:pgrnk:conv:2}
\begin{split}
	\left|\pr\left(R_{V_n}(n)>r\right)-\P\left(R_\emp>r\right)\right| \leq &
		\left|\pr\left(R_{V_n}(n)>r\right)-\E\left[\P_n\left(R^{\sss(N)}_\emp>r\right)\right]\right|\\
		& + \left|\E\left[\P_n\left(R^{\sss(N)}_\emp>r\right)\right]-\P\left(R^{\sss(N)}_\emp>r\right)\right| \\
		& + \left|\P\left(R^{\sss(N)}_\emp>r\right)-\P\left(R_\emp>r\right)\right|.
\end{split}
}
We show that \eqref{for:pgrnk:conv:1} holds by proving that every term in the left hand side of \eqref{for:pgrnk:conv:2} can be bounded by $\varepsilon/3$.  

By Lemma \ref{lem:finite:iteration} we can bound the first term with $c^{N+1}$ (independently of $n$). Therefore, defining $N_1 = \log_c(\varepsilon/3)$  and taking $N>N_1$, the first term is bounded by $\varepsilon/3$. 

For the last term, we apply Proposition~\ref{prop:existence:Remp}, so we can find $N_2 = N_2(\varepsilon)\in\N$ such that, for every $N\geq N_2$,
$$
	\left|\P\left(R^{\sss(N)}_\emp>r\right)-\P\left(R_\emp>r\right)\right|\leq \varepsilon/3.
$$
Set $N_0(\varepsilon) = \max(N_1,N_2)$. For any $N\geq N_0$, both the first and third terms are bounded by $\varepsilon/3$.
Using LW convergence in distribution, we can find $M(N_0,\varepsilon)\in\N$ such that, for every $n\geq M$, the second term is bounded by $\varepsilon/3$. This completes the proof of statement (a).

For statement (b),  we need to show that, for every $r>0$, as $n\rightarrow\infty$,
$$
	\frac{1}{n}\sum_{i=1}^n \I\{R_i(n)>r\}\stackrel{\pr}{\longrightarrow}\P\left(R_\emp>r\right).
$$ 
For every $N\in\N\cup\{\infty\}$ and $r\geq 0$, we denote the empirical fraction of vertices whose $N$th approximation of PageRank in $G_n$ exceeds $r$ by
$$
	\bar{R}(n;r,N) := \frac{1}{n}\sum_{i=1}^n \I\{R^{\sss(N)}_i(n)>r\}.
$$
If $N=\infty$, then $\bar{R}(n;r,N) = \bar{R}(n;r)$ is the empirical tail distribution of PageRank. By  LW convergence in probability of $(G_n)_{n\in\N}$, we know that, for every $N\in\N$ and $r>0$, 
\eqn{
\label{for:pgrnk:conv:3}
	\bar{R}(n;r,N)\stackrel{\pr}{\longrightarrow}\P\left(R_\emp^{(N)}>r\right).
} 
Fix $r>0$, $\varepsilon>0$. We need to show that for every $\delta>0$ there exists $n_0(\delta)\in\N$ such that, for any $n\geq n_0$, $\pr\left(\left|\bar{R}(n;r)-\P(R_\emp>r)\right|\geq \varepsilon\right)\leq\delta$. We can write, for $N$ to be fixed,
\eqn{
\label{for:pgrnk:conv:4}
\begin{split}
	\pr\left(\left|\bar{R}(n;r)-\P(R_\emp>r)\right|\geq \varepsilon\right)
	\leq &
		\frac{1}{\varepsilon}\bigg[\E[\bar{R}(n;r)-\bar{R}(n;r,N)]\\
		&+\E[|\bar{R}(n;r,N)-\P(R^{\sss(N)}_\emp>r)|]\\
		&+
			|\P(R^{\sss(N)}_\emp>r)-\P(R_\emp>r)|\bigg].
\end{split}
}
Similarly to \eqref{for:pgrnk:conv:2}, we can find $n$ and $N$ large enough such that every term in the right-hand side of \eqref{for:pgrnk:conv:4} is less than $\delta\varepsilon/3$. 

For the first term, we apply Lemma \ref{lem:finite:iteration}, so we can find $N_1$ large enough such that $c^{N_1+1}\leq  \delta\varepsilon/3$. For the last term, we apply Proposition \ref{prop:existence:Remp}, so we can find $N_2$ such that the last term is less than $\delta\varepsilon/3$. 

Take $N_0 = \max\{N_1,N_2\}$. Then, by \eqref{for:pgrnk:conv:3} and the fact that $\{\bar{R}(n;r,N)\}_{n\in\N}$ is uniformly integrable (since $\bar{R}(n;r,N)\leq1$), we can find $n_0$ big enough such that 
$$
	\E[|\bar{R}(n;r,N)-\P(R^{\sss(N)}_\emp>r)|]\leq \delta\varepsilon/3
$$
for all $n>n_0$, $N>N_0$. As a consequence, we conclude that, for any $n\geq n_0$, 
$$
	\pr\left(\left|\bar{R}(n;r)-\P(R_\emp>r)\right|\geq \varepsilon\right)\leq \delta,
$$
which proves the convergence in probability. 

\subsection{Undirected graphs} Undirected graphs are in fact a special case of directed graphs, where each link is reciprocated. Theorem~\ref{th:pagerank} does not make any assumption concerning link reciprocation, and thus it simply holds for undirected graphs as well. In that case, we may use the standard notion of the LWC for undirected graphs,  as described in Section~\ref{sec:LWC:und}, and it is not hard to see that our notion of directed LW convergence reduces to this.

Let us explain why the special case of undirected graphs deserves our attention. Indeed, usually, undirected graphs are easier to analyze than directed ones. For example, the adjacency matrix of an undirected graph is symmetric, which implies many nice properties. However, PageRank is based on directed paths, and its analysis is greatly simplified when these paths do not contain cycles, with high probability. 

For example, PageRank can be written as a product of three terms, one of which is the expected number of visits to $i$, starting from $i$, by a simple random walk, which terminates at each step with probability $c$~\cite{Avrachenkov2006effect}.  Now notice that in undirected graphs, each edge can be traversed in both directions, hence, a path starting at $i$ may return to $i$ in only two steps, so the average number of visits to $i$ will be  a random variable that depends on the entire neighborhood. In contrast, e.g., in the directed configuration model, returning to $i$ is highly unlikely. This makes PageRank in undirected graphs hard to analyze, and only few results have been obtained so far (see e.g. \cite{Avrachenkov2015undirected}). 

Our result simultaneously covers the directed and the undirected cases because we only state the equivalence between the behavior of PageRank on a graph and on its limiting object. In this setting, the difficulties that arise in the analysis of PageRank on undirected graphs are,  in fact, `postponed' to the (undirected) limiting random graph.

\section{Generalized PageRank} 
\label{sec:generalized}

\subsection{Universality of finite approximations}
In this section we will show that Theorem~\ref{th:pagerank} extends to generalized PageRank as given in \eqref{for:recursive:pagerank:pers:norm}. We will assume that $C_j\le c<1$, $j\in[n]$ are bounded away from one and that the vector ${\bf B}_n=(B_i)_{i\in[n]}$ consists of i.i.d.\ random variables that are independent of the graph $G_n$, and we let $\E(B_1)=1-c$ to keep the argument close to the basic case.

In this generalized setting, the proof of Lemma~\ref{lem:finite:iteration} goes through almost without changes. Let $\sub{A}$ be a matrix such that $A_{ij}(n)=C_j {e}_{ji}/\Dout_j$. Recall that $Q_{ij}(n)={e}_{ji}/\Dout_j$. Since $C_i\le c<1$ holds for all $i\in[n]$,
\begin{align*}
	\E\left[R_{V_n}(n) -R^{\sss(N)}_{V_n}(n) \right]&=\E\left[\sum_{i=1}^n\sum_{k=N+1}^\infty \left[{\bf B}_n(\sub{A}(n))^k\right]_i\right]\le \E\left[\sum_{i=1}^n\sum_{k=N+1}^\infty c^k \left[{\bf B}_n\sub{Q}(n)^k\right]_i\right]\\
	& = \E\left[\sum_{i=1}^n\sum_{k=N+1}^\infty c^k\sum_{j=1}^n B_j \sub{Q}(n)^k_{j,i}\right]\\
	&= \sum_{j=1}^n\sum_{k=N+1}^\infty c^k\,(1-c)\,\sum_{i=1}^n\sub{Q}(n)^k_{j,i}\le c^{N+1},
\end{align*}
where in the final equality we have used the independence of $B_j$ and the graph $G_n$ (and thus $\sub{Q}(n)$).

Furthermore, Proposition~\ref{prop:existence:Remp} goes through without changes. The only difference is that additional randomness arises through the random $(C_i)_{i\in[n]}$ and $(B_i)_{i\in[n]}$. Therefore, for generalized PageRank, the first and the last terms in  \eqref{for:pgrnk:conv:2} and \eqref{for:pgrnk:conv:4} can be bounded exactly as before. This is natural because the first and the last terms approximate the PageRank in, respectively, original graph and the limiting graph, by finite iterations, and this approximation does not depend on the random $(C_i)_{i\in[n]}$ and $(B_i)_{i\in[n]}$ under quite general assumptions. 


It remains to analyze the second term in \eqref{for:pgrnk:conv:2} and \eqref{for:pgrnk:conv:4}. This is more tricky because this term bounds the difference between the finite random graph and the limiting object. Difficulties arise since $(C_i)_{i\in[n]}$ and $(B_i)_{i\in[n]}$ are associated to vertex labels in $[n]$. This information is lost in the LW limit, therefore additional assumptions are necessary to prove that the second term in \eqref{for:pgrnk:conv:2} and \eqref{for:pgrnk:conv:4} is small.  We next discuss two possible settings how LWC can be used in the generalized PageRank setting. 

\subsection{Independent $(C_i)_{i\in[n]}$ and $(B_i)_{i\in[n]}$}
\label{sec:LWC:CB:indep}
First, we assume that $(C_i)_{i\in[n]}$ and $(B_i)_{i\in[n]}$ are independent of the graph sequence $(G_n)_{n\in\N}$, and $(C_i)_{i\in[n]}$ and $(B_i)_{i\in[n]}$ are each i.i.d.\ sequences that are independent of each other. In this case, on the limiting marked rooted graph $(G,\emp,M(G))$ we assign to every vertex $v\in V(G)$ independent samples $C_v$ and $B_v$. In this case, for $(H,y,M(H))$ a finite marked rooted graph, since $(C_i)_{i\in[n]}$ and $(B_i)_{i\in[n]}$ are independent of the graph, 
\eqn{
\label{for:generalized:cor:1}
\begin{split}
	&\frac{1}{n}\sum_{i\in[n]}\pr\left(R^{\sss(N)}_{i}(n)>r~|~U_{\leq N}(i)\cong (H,y,M(H))\right)\pr\left(U_{\leq N}(i)\cong (H,y,M(H))\right)\\
& = \pr(\widehat{R}^{\sss(N)}(H,y,M(H))>r)\frac{1}{n}\sum_{i\in[n]}\pr\left(U_{\leq N}(i)\cong (H,y,M(H))\right),
\end{split}
} 
where now $\I\{\widehat{R}^{\sss(N)}(H,y,M(H))>r\}$ is a function of the finite structure given by $(H,y,M(H))$, where the randomness is only given by a finite number of $(C_i)_{i\in[n]}$ and $(B_i)_{i\in[n]}$.  We note that \eqref{for:generalized:cor:1} only assumes that $(C_i)_{i\in[n]}$ and $(B_i)_{i\in[n]}$ are independent of the graph sequence $(G_n)_{n\in\N}$. In order to be able to define the local-weak limit, though, we further need the independence and i.i.d.\ assumptions on $(C_i)_{i\in[n]}$ and $(B_i)_{i\in[n]}$. Then, a similar expression to \eqref{for:generalized:cor:1} holds for the limiting graph $(G,\emp,M(G))$. As a consequence, the second term in \eqref{for:pgrnk:conv:2} can be written as
\eqn{
\label{for:generalized:cor:2}
\begin{split}
	& \left|\E\left[\P_n\left(R^{\sss(N)}_\emp>r\right)\right]-\P\left(R^{\sss(N)}_\emp>r\right)\right| \\
	& = \sum_{\sss(H,y,M(H))}\pr(\widehat{R}^{\sss(N)}(H,y,M(H))>r)\\
	&\quad \quad   \times \bigg|\E[\P_n(U_{\leq N}(\emp)\cong (H,y,M(H)))]-\P(U_{\leq N}(\emp)(H,y,M(H)))\bigg|\\
	&\leq \sum_{\sss(H,y,M(H))}\bigg|\E[\P_n(U_{\leq N}(\emp)\cong (H,y,M(H)))]-\P(U_{\leq N}(\emp)(H,y,M(H)))\bigg|\\
	& = 2d_{\sss \mathrm{TV}}(\E\P_n,\P),
\end{split}
}
where $\E\P_n$ is the distribution given by $\E[\P_n(\cdot)]$, and the last term is the total variation (TV) distance between $\P$ and $\E\P_n$. Since $\tilde{\G}_\star$ is discrete, convergence in distribution implies convergence in TV distance, so that $ 2d_{\sss\mathrm{TV}}(\E\P_n,\P)=o(1)$. The fact that the term $\pr(\widehat{R}^{\sss(N)}(H,y,M(H))>r)$ is the same for the graph sequence and the limit comes from the fact we are looking at expectations of i.i.d.\ random variables on a given structure $(H,y,M(H))$. 

The bound in \eqref{for:generalized:cor:2} is enough to conclude that the generalized PageRank with $(C_i)_{i\in[n]}$ and $(B_i)_{i\in[n]}$ independent of the graph, and themselves independent i.i.d.\ sequences, converges in distribution. Here no further assumptions are made on the distributions $C$ and $B$. Such result does not apply to the convergence in probability, since \eqref{for:generalized:cor:1} is an expectation with respect to the random graph.

In this setting, for $N\in\N$, the limiting distribution $R^{\sss (N)}_\emp$ of the $N$th approximation of PageRank is again a weighted sum of all paths of length at most $N$ that ends at the root $\emp$. In particular, $R^{\sss (N)}_\emp$  is given by
$$
	R^{\sss (N)}_\emp = \sum_{k=0}^N\sum_{\ell\in \mathrm{path}_\emp(k)}B_{\ell_{k}}\prod_{h=1}^k \frac{C_{\ell_h}}{\mout_{\ell_h}}.
$$
where now a path $\ell\in \mathrm{path}_\emp(k)$ contributes with the weight $B_{\ell_{k}}\prod_{h=1}^k C_{\ell_h}/\mout_{\ell_h}$, and again, all the appearing $(C_i)_{i\geq 1}$ and $(B_i)_{i\geq 1}$ are independent i.i.d.\ sequences.

\subsection{Extended directed LW convergence}
\label{sec:LWC:CB:extended}
The advantage of \eqref{for:generalized:cor:1} is that, once the structure $(H,y,M(H))$ is fixed, the probability that PageRank exceeds $r$ is given by an expectation in terms of $(C_i)_{i\in[n]}$ and $(B_i)_{i\in[n]}$.  Equation \eqref{for:generalized:cor:1} does not extend to convergence in probability, since we are taking expectations. In fact, when considering convergence in probability, we have to prove that the second term in \eqref{for:pgrnk:conv:4} converges to zero in probability. With a similar argument as the one that we have used to get \eqref{for:generalized:cor:1}, for any $(H,y,M(H))$ finite marked directed rooted graph,
	\eqn{
	\frac{1}{n}\sum_{i\in[n]}\I\{R^{\sss(N)}_{i}(n)>r,\ U_{\leq N}(i)\cong (H,y,M(H))\}-\P\left(R^{\sss(N)}_\emp>r,\ U_{\leq N}(\emp)\cong (H,y,M(H))\right).
	}
Here, the convergence in probability of the graph sequence is not enough to conclude that the sum over all possible finite structures $(H,y,M(H))$ is small. 

In order to prove this convergence in probability, we need to include $(C_i)_{i\in[n]}$ and $(B_i)_{i\in[n]}$ as additional marks in the definition of directed marked rooted graphs. In the exploration process described in Definition \ref{def:rootneigh:dir}, to every explored vertex $v$ we assign a mark $\mout_v$ that is equal to the mark of $v$ in the starting graph. Assuming that $(C_i)_{i\in[n]}$ and $(B_i)_{i\in[n]}$ take {\em discrete values}, we can assign a multi-mark $(\mout_v,C_v,B_v)$ to vertices found in the exploration process. Here, we then need no independence assumptions on $(C_i)_{i\in[n]}$ and $(B_i)_{i\in[n]}$ w.r.t.\ the graph $G_n$, but beware that the notion of multi-marked LWC has become significantly stronger.

This leads to an extended definition of local weak convergence on directed multi-marked rooted graphs, where now the definition of isomorphism (as in Definition \ref{def:isomor:dir}) includes the preservation of the multi-marks. More precisely, an isomorphisms between two directed multi-marked rooted graphs $(G,\emp,M(G))$ and $(G',\emp',M(G'))$ is a map $\gamma:G\rightarrow G'$  such that it satisfies Definition \ref{def:isomor:dir} and , for every $v\in V(G)$, $C_{\gamma(v)} = C_v$ and $B_{\gamma(v)} = B_v$.

It is easy to verify that the construction of the extended directed local weak convergence is the same as the one presented in Section \ref{sec:LWC:dir}, where now instead of marks we consider multi-marks. As a consequence, the family of functions $(\I\{R^{\sss (N)}_\emp>r\})_{N\in\N}$ is continuous with respect to the topology of the extended directed LW convergence, therefore \eqref{for:pgrnk:conv:4} follows immediately. In the next section, we formalize these two different approaches to LWC.

\subsection{Formulation of the result for generalized PageRank}
We can summarize the results discussed for the generalized PageRank in the following theorem:

\begin{Theorem}[Asymptotic generalized PageRank distribution]
\label{th:pagerank:pers}
Let $(G_n)_{n\in\N}$ be a sequence of directed random graphs. Consider the generalized PageRank as in \eqref{for:recursive:pagerank:pers:norm}, where, for $j\in[n]$,  $A_j=C_j/\Dout_j$, where $C_j$'s are random variables bounded by $c<1$ and the random vector $(B_i)_{i\in[n]}$ satisfies $\E(B_1)=1-c$ and is independent of $G_n$. Then, the following holds:
\begin{enumerate}
	\item[{(a)}] Assume that $(C_i)_{i\in[n]}$ are i.i.d., $(C_i)_{i\in[n]}$ is independent of  $(B_i)_{i\in[n]}$,$(C_i)_{i\in[n]}$ and $(B_i)_{i\in[n]}$ are independent i.i.d.\ sequences that are independent of $G_n$. If $G_n$ converges LW in distribution in the sense of Definition \ref{def:LWC:dir}, then there exists a distribution $R_\emp$ such that $R_{V_n}(n)\stackrel{d}{\rightarrow}R_\emp$;
	\item[{(b)}] Assume that $(C_i)_{i\in[n]}$ and $(B_i)_{i\in[n]}$ take discrete values. Then, Theorem \ref{th:pagerank} holds for the extended LWC for multi-marked directed graphs defined in Section \ref{sec:LWC:CB:extended}.
\end{enumerate}
\end{Theorem}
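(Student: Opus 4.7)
My plan is to mirror the three-step structure of the proof of Theorem \ref{th:pagerank} (finite approximation, convergence along finite approximations via LWC, existence of the limiting random variable), and then identify precisely where the two additional sources of randomness $(C_i)_{i\in[n]}$ and $(B_i)_{i\in[n]}$ force modifications. As the excerpt already observes, the finite-approximation bound of Lemma \ref{lem:finite:iteration} carries over verbatim, using $C_j\le c<1$ and the independence of the $(B_i)$ from $G_n$ together with $\E[B_1]=1-c$; likewise Proposition \ref{prop:existence:Remp} goes through since the limiting sequence $(R^{\sss(N)}_\emp)_{N\in\N}$ remains monotone in $N$ and Fatou's Lemma still gives $\E_\P[R_\emp]\le 1$. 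So the first and third terms of the triangle inequalities \eqref{for:pgrnk:conv:2} and \eqref{for:pgrnk:conv:4} can be handled exactly as in the basic case. The real work is in the middle term, which compares the random graph with its local weak limit.

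For part (a), I would argue as follows. Fix a finite marked rooted graph $(H,y,M(H))$ and note that, once the neighbourhood $U_{\leq N}(i)$ is isomorphic to $(H,y,M(H))$, the quantity $R^{\sss(N)}_i(n)$ is a function of finitely many i.i.d.\ copies of $C$ and $B$ attached to the vertices of $(H,y,M(H))$. Writing this function as $\widehat R^{\sss(N)}(H,y,M(H))$, the conditional distribution of $R^{\sss(N)}_i(n)$ given $\{U_{\leq N}(i)\cong (H,y,M(H))\}$ depends only on the isomorphism class, not on $n$ or $i$, by the assumed independence of $(C_i),(B_i)$ from $G_n$ and from each other. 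This is exactly the content of \eqref{for:generalized:cor:1}. Summing over $(H,y,M(H))$ and applying the same identity on the limit $\P$, the second term in \eqref{for:pgrnk:conv:2} reduces to the total variation distance between $\E\P_n$ and $\P$ on the finitely many isomorphism classes of $N$-neighbourhoods of a given size. Since $\tilde\G_\star$ is discrete and LW convergence in distribution implies $\E\P_n\to\P$ pointwise on this countable set, the TV distance over the $N$-truncation vanishes as $n\to\infty$. Choosing $N$ first and then $n$, as in the proof of Theorem \ref{th:pagerank}(a), completes the argument.

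For part (b), the key observation is that the conditioning trick of (a) relies on taking an \emph{expectation} and therefore does not yield convergence in probability for the empirical tail of the $N$th approximation. This is precisely why the excerpt upgrades the marks to multi-marks $(\mout_v,C_v,B_v)$, with $C_v,B_v$ taking values in a countable (discrete) set, and correspondingly strengthens the notion of isomorphism and LWC. Under this extended LWC, I would verify that for every $r>0$ and every $N\in\N$ the function
\[
(G,\emp,M(G))\mapsto \I\{R^{\sss(N)}_\emp>r\}
\]
is bounded and continuous with respect to the multi-marked local distance, because $R^{\sss(N)}_\emp$ depends only on the (multi-marked) neighbourhood $U_{\leq N}(\emp)$. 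Then the middle terms of \eqref{for:pgrnk:conv:2} and \eqref{for:pgrnk:conv:4} vanish by the definition of the extended LWC in distribution (resp.\ in probability), exactly as in Section \ref{sec:pagerank}. The rest of the proof is then a literal repetition of the argument there, using the extended Lemma \ref{lem:finite:iteration} and Proposition \ref{prop:existence:Remp} already described.

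The main obstacle, as already flagged by the authors, is the second term: the $(C_i),(B_i)$ are naturally attached to vertex labels in $[n]$, whereas local weak limits forget these labels. The whole point of the dichotomy between (a) and (b) is precisely how one overcomes this. In (a) one restores label-freeness by averaging over the i.i.d.\ randomness, which only gives convergence in distribution; in (b) one restores label-freeness by enlarging the state space, paying the price of a stronger mode of graph convergence but gaining the full strength of Theorem \ref{th:pagerank}, including the almost sure and in-probability statements. I do not foresee any further technical difficulty: once the right notion of marked convergence is fixed, the functions $\I\{R^{\sss(N)}_\emp>r\}$ are literally the continuous test functions the definition is built for, and the rest of the proof is mechanical.
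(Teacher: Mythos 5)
Your proposal is correct and follows essentially the same route as the paper: the generalized finite-approximation bound and Proposition \ref{prop:existence:Remp} handle the outer terms of \eqref{for:pgrnk:conv:2} and \eqref{for:pgrnk:conv:4}, part (a) is the conditioning-on-the-neighbourhood argument of \eqref{for:generalized:cor:1}--\eqref{for:generalized:cor:2} reducing the middle term to a total variation distance that vanishes by discreteness of $\tilde{\G}_\star$, and part (b) is the multi-mark extension with the continuity of $\I\{R^{\sss(N)}_\emp>r\}$ in the extended topology. No substantive differences from the paper's proof.
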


Theorem \ref{th:pagerank:pers}(a) is given by the independent setting in Section \ref{sec:LWC:CB:indep}. This method is simpler, in the sense that it does not require additional constructions than the ones used to prove Theorem \ref{th:pagerank}. On the other hand, it gives a weaker result, since the convergence holds in distribution. Also, we need to assume that $(C_i)_{i\in[n]}$ are i.i.d.\ and they are independent of $(B_i)_{i\in[n]}$ and the graph $G_n$. In this case, it is not clear what the appropriate conditions are under which LWC in probability holds.

Theorem \ref{th:pagerank:pers}(b) depends on the extended LWC notion of Section \ref{sec:LWC:CB:extended}. The reformulation of LWC requires less assumptions, in the sense that now we allow the distribution $(\Din,\Dout,C,B)$ to have dependent components. The disadvantage is that, to incorporate $(C_i)_{i\in[n]}$ and $(B_i)_{i\in[n]}$ in the definition of isomorphism, we require them to take discrete values, and the notion of LWC is stronger. This might not be suitable for applications. We next remark about a possible way to avoid this unnatural discreteness assumption:

\begin{Remark}[Weighted rooted graphs]
{\em 
	Benjamini, Lyons  and Schramm \cite{benjamini15} consider undirected LWC in the case of weighted edges. In particular, they define a different metric on the space of weighted rooted graphs, that includes the distance between edge weights. This construction can be extended to vertex weights, and it would lead to a different approach to investigate generalized PageRank. This requires additional work, for example due to the fact that the metric in \cite{benjamini15} is not a simple extension of the metrics that we consider in Sections \ref{sec:LWC:und} and \ref{sec:LWC:dir}. We refrain from studying this further.
}
\end{Remark}

\section{Examples of directed local weak convergence}
\label{sec:examples}
\subsection{Directed configuration model}
\label{sec:ex:DCM}

The directed configuration model (DCM) is a version of the configuration model where half-edges are labeled as in- and out-half-edges. In this setting, $\mathrm{DCM}_n$ is a directed graph of size $n\in\N$ with prescribed in- and out- degree sequences. We denote the in-degree sequence by $\subDin_n = (\Din_1,\ldots,\Din_n)$ and the out-degree sequence by $\subDout_n = (\Dout_1,\ldots, \Dout_n)$. We call $(\subDout_n,\subDin_n)$ the {\em bi-degree sequence} of the graph.

For a precise description of DCM, we refer to \cite{Olvera,Litvak3}. 
The graph is defined as follows: let $n\in\N$ be the size of the graph, and fix a bi-degree sequence $(\subDout_n,\subDin_n)$. The graph is generated by fixing a free outgoing half edge and we pair it uniformly at random with a free incoming half edge. In this process, self loops and multiple edges are allowed. Until the pairing is made uniformly, it is not relevant in which order we choose the free outgoing half-edge. In this setting, the total in-degree and the out-degree of the graph have to be equal. In the case of random in- and out-degrees, this is a rare event. The algorithm  presented in \cite{Olvera} generates an admissible bi-degree sequence  in a finite number of steps, and approximates the initial degree distributions.

\begin{Condition}[Bi-degree regularity conditions]
\label{cond:bideg:regular}
Let $(\subDout_n,\subDin_n)$ be a bi-degree sequence. Then, the {\em bi-degree regularity conditions} are as follows:
\begin{itemize}
\item[{(a)}] There exists a distribution  $(p(h,l))_{h,l\in\N}$ such that, for every $h,l\in\N$, as $n\rightarrow\infty$, 
	\eqn{
	\label{for:DCM:degcond1}
		\frac{1}{n}\sum_{i\in[n]} \I_{\{\Dout_i=h,\Din_i=l\}}\longrightarrow p(h,l);
	}
\item[{(b)}] Denote by $(\DDout,\DDin)$ a pair of random variables with distribution $(p(h,l))_{h,l\in\N}$ as in \eqref{for:DCM:degcond1}. Then, as $n\rightarrow\infty$, 
\eqn{
	\label{for:DCM:degcond2}
	\frac{1}{n}\sum_{i\in[n]}h\I_{\{\Dout_i=h\}}\longrightarrow \E\left[\DDout\right], \quad\quad \frac{1}{n}\sum_{i=1}^n l\I_{\{\Din_i=l\}}\longrightarrow \E\left[\DDin\right],
}
and $\E\left[\DDin\right]= \E\left[\DDout\right]$;
\item[{(c)}] For $L_n = \Dout_1+\cdots+\Dout_n$, as $n\rightarrow\infty$,
\eqn{
\label{for:DCM:degcon3}
	\frac{1}{n}\sum_{i\in[n]} \frac{h}{L_n}\I_{\{\Dout_i=h,\Din_i = l\}}\longrightarrow \frac{k}{\E[\DDout]}p(h,l) =: p^\star(h,l).
}  
Denote by $(\D^{\star{\sss(\mathrm{out})}},\DDin)$ a pair of random variable with distribution $(p^\star(h,l))_{h,l\in\N}$.
\end{itemize}
\end{Condition}

Condition \ref{cond:bideg:regular}(a) implies that the empirical bi-degree distribution converges to a limiting distribution given by $(p(h,l))_{h,l\in\N}$ as in \eqref{for:DCM:degcond1}. 
Condition \ref{cond:bideg:regular}(b) implies that both the in- and out-degree distributions have finite first moment, equal to the one of $(p(h,l))_{h,l\in\N}$. Condition \ref{cond:bideg:regular}(c) implies that the out-degree size-biased distribution converges to a limiting distribution $(p^\star(h,l))_{h,l\in\N}$ as in \eqref{for:DCM:degcon3}.

With Condition \ref{cond:bideg:regular}, we are ready to state the convergence result on DCM:
\begin{Proposition}
\label{prop:CDM:limit}
Consider a directed configuration model $\mathrm{DCM}_n$ such that the bi-degree sequence $(\subDout_n,\subDin_n)$ satisfies Condition \ref{cond:bideg:regular}. Then, $\mathrm{DCM}_n$ {\em converges in probability in the directed LW sense} to the law $\P$ of a {\em marked} Galton-Watson tree, where
\begin{enumerate}
\item edges are directed from children to parents;
\item  the mark and the in-degree of the root are distributed as $(\DDout,\DDin)$ as in \eqref{for:DCM:degcond1};
\item the mark and the in-degree of any other vertex are independent across the tree vertices, and are distributed according to $(\mathcal{D}^{\star{\sss(\mathrm{out})}},\DDin)$ as in \eqref{for:DCM:degcon3}.
\end{enumerate}
\end{Proposition}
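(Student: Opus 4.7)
The plan is to apply the criterion of Remark~\ref{rem:crit:Dlwc}: it suffices to show that for every $k\in\N$ and every finite marked directed rooted graph $(H,y,M(H))$,
\eqn{
\label{plan-target-DCM}
	\frac{1}{n}\sum_{i\in[n]}\I\{U_{\leq k}(i)\cong (H,y,M(H))\}\stackrel{\pr}{\longrightarrow}\P\big(U_{\leq k}(\emp)\cong (H,y,M(H))\big),
}
where $\P$ denotes the law of the marked Galton-Watson tree described in the statement. By Chebyshev's inequality, it is enough to establish (i) convergence of the first moment of the left-hand side of \eqref{plan-target-DCM} to the right-hand side, and (ii) that the variance of the left-hand side vanishes as $n\to\infty$.

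For (i), exchangeability of vertex labels in $\mathrm{DCM}_n$ reduces the expectation to $\pr(U_{\leq k}(V_n)\cong (H,y,M(H)))$ for $V_n$ a uniformly chosen vertex. I would analyze the exploration of Definition~\ref{def:rootneigh:dir} step by step. The root's (mark, in-degree) $(\Dout_{V_n},\Din_{V_n})$ converges in distribution to $(\DDout,\DDin)$ by Condition~\ref{cond:bideg:regular}(a). At every subsequent BFS step, a free in-half-edge of an active vertex is paired with a uniformly chosen free out-half-edge. Since there are $L_n\sim n\,\E[\DDout]$ out-half-edges and only $O(1)$ have been revealed after a bounded-depth exploration, by Condition~\ref{cond:bideg:regular}(c) the newly discovered vertex's bi-degree is asymptotically $(\mathcal{D}^{\star(\mathrm{out})},\DDin)$, with independence across the finitely many exploration steps. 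Finally, the probability that a pairing closes a cycle (i.e., lands on a half-edge already revealed) is $O(1/n)$ at each of the $O(1)$ steps, so the $k$-neighborhood is a tree with probability $1-O(1/n)$. Combining these facts identifies the limit of the first moment with the branching-process probability.

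For (ii), I would compute $\pr(U_{\leq k}(V_n)\cong (H,y,M(H)),\ U_{\leq k}(V_n')\cong (H,y,M(H)))$ for two independent uniform vertices $V_n,V_n'$ by running the two BFS explorations sequentially. With probability $1-O(1/n)$, the two explorations use disjoint sets of half-edges (each has bounded size and there are $\Theta(n)$ half-edges available), and conditional on disjointness, the joint distribution of the two revealed subgraphs factorizes asymptotically into the product of two independent copies of the single-root law. This yields the square of the first-moment limit, so the variance tends to zero and \eqref{plan-target-DCM} follows.

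The main obstacle is of a technical bookkeeping nature: I need to rigorously justify that uniform pairing on a depleted pool of out-half-edges yields, jointly across the finitely many steps of the bounded-depth exploration, an exact size-biased draw from the bi-degree distribution. This relies on the regularity $L_n=n\,\E[\DDout](1+o(1))$ from Condition~\ref{cond:bideg:regular}(b), ensuring that the depletion from previous steps only contributes $O(1/n)$ corrections which are negligible once summed over the $O(1)$ steps. A minor but important point is that the marks of boundary vertices at distance exactly $k$ from the root are recorded as their full out-degrees (not their in-tree out-degrees), which is exactly consistent with how marks are defined on the limiting marked Galton-Watson tree.
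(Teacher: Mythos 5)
Your proposal follows essentially the same route as the paper: the first moment is handled by coupling the bounded-depth exploration with a marked Galton--Watson tree whose offspring bi-degrees are size-biased draws (the paper's Lemma~\ref{lem:DCM:coupling}), and concentration is obtained by a second moment method on the count of vertices with a prescribed neighborhood, using that two independent explorations are disjoint with high probability. The only imprecision is your claim that hitting an already-revealed vertex has probability $O(1/n)$ per step; it is in fact $O(\Dout_{max}/n)$, which is still $o(1)$ since Condition~\ref{cond:bideg:regular} forces $\Dout_{max}=o(n)$, exactly as the paper notes.
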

The proof of Proposition \ref{prop:CDM:limit} is an adaptation of the proof for the undirected case as presented in \cite[Section 2.2.2]{vdHfleur}. The proof is divided in two parts. First, we use a coupling argument to prove that $\mathrm{DCM}_n$ converges in distribution to the prescribed limit. The second part consists in the application of the second moment method  on the number of vertices in $\mathrm{DCM}_n$ with a fixed finite neighborhood structure, to prove that the number of such vertices is concentrated around its mean.

We start with the coupling argument:
\begin{Lemma}[LW convergence of DCM in distribution]
\label{lem:DCM:coupling}
Fix a finite marked rooted tree $(H,y,M(H))$. Under the assumptions of Proposition~\ref{prop:CDM:limit} there exists a marked Galton-Watson tree $\mathrm{GW}^{(n)}$ such that  
\eqn{
\label{for:CDM:coupling:1}
	\pr\big(U_{\leq k}(V_n) \cong (H,y,M(H))\big)= \pr\left(\mathrm{GW}^{(n)}_{\leq k}\cong (H,y,M(H))\right)+o(1),
}
where $\mathrm{GW}^{(n)}_{\leq k}$ denote the first $k$ generations of $\mathrm{GW}^{(n)}$. Further, $\mathrm{GW}^{(n)}\rightarrow \P$ locally weakly in distribution, where $\P$ is the limit in Proposition \ref{prop:CDM:limit}. As a consequence, $\mathrm{DCM}_n\rightarrow \P$ locally weakly in distribution.

\end{Lemma}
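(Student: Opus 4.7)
The plan is to implement the standard coupling between the breadth-first exploration of the in-neighborhood of a uniform vertex in $\mathrm{DCM}_n$ and a marked Galton--Watson tree whose offspring and mark distributions are read off from the \emph{empirical} bi-degree sequence. Concretely, I define $\mathrm{GW}^{(n)}$ as follows: sample the root $V_n$ uniformly from $[n]$, assign it the mark $\Dout_{V_n}$ and give it $\Din_{V_n}$ children; for every subsequent vertex, sample an independent $J\in[n]$ with probability proportional to $\Dout_J$ (the empirical size-biased out-degree distribution), assign it mark $\Dout_J$ and give it $\Din_J$ further children. This is exactly the process one obtains by performing the half-edge pairings of Definition~\ref{def:rootneigh:dir} \emph{with replacement}.

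First I would couple the directed exploration in $\mathrm{DCM}_n$ to $\mathrm{GW}^{(n)}$ pairing by pairing. At each step of the BFS an active in-half-edge is matched to an out-half-edge: in $\mathrm{DCM}_n$ the out-half-edge is chosen uniformly over the \emph{remaining free} out-half-edges, whereas in $\mathrm{GW}^{(n)}$ it is chosen uniformly over \emph{all} $L_n$ out-half-edges. The coupling succeeds as long as (i) the chosen out-half-edge has not been used at a previous step, and (ii) the vertex it belongs to has not been discovered before (so that the exploration stays tree-like and no edges are drawn between already-explored vertices as in step (3) of Definition~\ref{def:rootneigh:dir}).

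Next I would bound the failure probability. Since $(H,y,M(H))$ is a fixed finite tree, the total number of half-edges paired during the exploration of $U_{\leq k}(V_n)$ is deterministically bounded by some constant $K=K(H)$. Each new pairing therefore fails with probability at most $K/(L_n-K)$; by Condition~\ref{cond:bideg:regular}(b) one has $L_n/n\to \E[\DDout]>0$, so this is $O(1/n)$, and summing over the $O(1)$ pairings of the exploration yields a failure probability of $O(1/n)=o(1)$. This proves \eqref{for:CDM:coupling:1}. To then pass from $\mathrm{GW}^{(n)}$ to $\P$, I would argue that for any finite marked rooted tree $(H,y,M(H))$ the probability $\pr(\mathrm{GW}^{(n)}_{\leq k}\cong (H,y,M(H)))$ is a continuous functional of the empirical bi-degree distribution and its size-biased version: by Condition~\ref{cond:bideg:regular}(a) the root's $(\mout,\din)$ converges in distribution to $(\DDout,\DDin)$, and by Condition~\ref{cond:bideg:regular}(c) the $(\mout,\din)$-pairs of the non-root vertices converge in distribution to independent copies of $(\D^{\star{\sss(\mathrm{out})}},\DDin)$. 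Combining this convergence with the coupling bound and invoking Remark~\ref{rem:crit:Dlwc} yields $\mathrm{DCM}_n\to\P$ locally weakly in distribution.

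The main obstacle I anticipate is the careful bookkeeping of marks during the exploration: the definition of the exploration process assigns to every discovered vertex the out-degree mark it carries in the underlying graph, so I must check that in the coupling the root of $\mathrm{GW}^{(n)}$ inherits its mark from the uniform distribution (hence $\DDout$ in the limit), while every non-root vertex inherits its mark from the size-biased distribution (hence $\D^{\star{\sss(\mathrm{out})}}$ in the limit), and in particular that the in-degree used to generate further children and the out-degree used as the mark come from the \emph{same} sampled vertex, thus preserving the joint distribution $(p^\star(h,l))$ rather than its marginals. A minor secondary issue is that when the bi-degree sequence is itself random (as in the algorithm of \cite{Olvera}) the argument should be conditioned on a typical realization of $(\subDout_n,\subDin_n)$; this is harmless since Condition~\ref{cond:bideg:regular} holds almost surely or in probability for those constructions.
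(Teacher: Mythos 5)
Your proposal is correct and follows essentially the same route as the paper: couple the backward (in-edge) exploration of $\mathrm{DCM}_n$ to a Galton--Watson tree built from the empirical distributions $p_n$ (root) and $p_n^\star$ (non-root), bound the two failure modes (re-using an out-half-edge, re-discovering a vertex), and then let $p_n\to p$, $p_n^\star\to p^\star$ via Condition~\ref{cond:bideg:regular}. The only bookkeeping point worth tightening is your per-pairing failure bound $K/(L_n-K)$ for mode (ii): the probability of landing on an already-discovered vertex is $\sum_{i\,\mathrm{discovered}}\Dout_i/L_n$, which is $O(1/n)$ only because, on the event that the exploration is still consistent with the fixed $(H,y,M(H))$, the out-degrees of discovered vertices equal the (bounded) marks of $H$ --- the paper instead bounds this uniformly by $\tfrac{t(t+1)}{2L_n}\Dout_{max}$ and uses $\Dout_{max}=o(n)$.
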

\begin{proof}
We prove that, for every finite $k\in\N$ and $n$ large enough, the $k$-neighborhood of a uniform chosen vertex in $\mathrm{DCM}_n$ has approximately the same distribution as the first $k$ generations of a marked Galton-Watson tree $\mathrm{GW}^{(n)}$, where marks and offspring distributions in $\mathrm{GW}^{(n)}$ depends on $n$. Define $(p_n(h,l))_{h,l\in\N}$ and $(p^\star_n(h,l))_{ih,l\in\N}$ by
\eqn{
\label{for:DCM:empdist}
	p_n(h,l) = \frac{1}{n}\sum_{i\in[n]} \I_{\{\Dout_i=h,\Din_i=l\}}, \quad p_n^*(h,l) = \frac{1}{n}\sum_{i\in[n]} \frac{h}{L_n}\I_{\{\Dout_i=h,\Din_i = l\}},
}
where $L_n = \Dout_1+\cdots+\Dout_n$. 

The coupling is constructed as follows: the mark and the degree of the root both in $U_{\leq k}(V_n)$ and in $\mathrm{GW}^{(n)}$ are chosen  according to the distribution $p_n$ as in \eqref{for:DCM:empdist}. Therefore, $U_{\leq 0}(V_n)$ and the $0$-generation of $\mathrm{GW}^{(n)}$ (which both consist only of the root and its mark) are the same. 

We have to construct $U_{\leq k}(V_n)$ and  $\mathrm{GW}^{(n)}_{\leq k}$ at the same time.
Conditioning on $U_{\leq k-1}(V_n)$ and $\mathrm{GW}_{\leq k-1}^{(n)}$, the new exploration step from $U_{\leq k-1}(V_n)$ to $U_{\leq k}(V_n)$ is made as follows: assuming that during the exploration up to distance $k-1$ we have created $t$ edges,  take the first unpaired incoming half-edge $x_{t+1}$, that we pair to a uniformly chosen outgoing half-edge that is not paired yet. We choose this outgoing half-edge $y_{t+1}$ uniformly at random among all outgoing half-edges, independently from the previously matched half-edges. 

Let $W_{t+1}$ be the vertex  in $\mathrm{DCM}_n$ to which $y_{t+1}$ is incident. Then, in $\mathrm{GW}^{(n)}_{\leq k}$ we assign to a new vertex mark and in-degree equal to $(\Dout_{W_{t+1}}, \Din_{W_{t+1}})$. Notice that in this case the pair $(\Dout_{W_{t+1}}, \Din_{W_{t+1}})$ is distributed as $p_n^\star$ given in \eqref{for:DCM:empdist}. 

 In $U_{\leq k-1}(V_n)$ we have to be careful since the half-edge $y_{t+1}$ might have already been paired. If $y_{t+1}$ has not been paired yet, then we pair $x_{t+1}$ to $y_{t+1}$ to create an edge. If $y_{t+1}$ has already been paired, then we draw a new outgoing half-edge $y'_{t+1}$ chosen uniformly from the unpaired ones. 
 
 We do this procedure for every ingoing half-edge $x_{t+1},\ldots, x_{t+s}$, where $s$ is the number of unpaired ingoing half-edges in $U_{\leq k-1}(V_n)$.  We can  have differences between the exploration process in $\mathrm{DCM}_n$ and  $\mathrm{GW}^{(n)}$. Differences can happen in two ways:
\begin{enumerate}
	\item the outgoing half-edge that we select to create a new edge has already been paired;
	\item the outgoing half-edge that we select to create a new edge has not been paired yet, but it is incident to a vertex already found in the exploration process.
\end{enumerate}
These two contributions have small probability. In fact, after creating $t$ edges, the probability that we select an outgoing half-edge that is already used is equal $ t/L_n$, where $L_n$ is the total number of outgoing edges. This means that the probability that in the first $s$ steps  we use the same out-going half-edge twice is bounded by
\eqn{
\label{for:CDM:reuseedges}
	\sum_{t=0}^s \frac{t}{L_n} = \frac{s(s+1)}{2L_n}.
}
Thanks to Condition \ref{cond:bideg:regular}(b), $L_n$ is of order $n$, so the expression in \eqref{for:CDM:reuseedges} is $o(1)$ whenever $s = o(\sqrt{n})$. The probability of selecting a vertex $i$ when choosing an outgoing half-edge is $\Dout_i/L_n$. Then, the probability that a vertex $i$ is selected at least twice when $t$ edges are created is bounded by
\eqn{
\label{for:CDM:reuseV:1}
	\frac{t(t+1)}{2} \frac{(\Dout_i)^2}{L_n^2}
}
Using \eqref{for:CDM:reuseV:1} and the union bound, the probability that a vertex is selected twice when $T$ edges are created is bounded by
\eqn{
\label{for:CDM:reuseV:2}
	\frac{t(t+1)}{2}\sum_{i=1}^n\frac{(\Dout_i)^2}{L_n^2} \leq \frac{t(t+1)}{2L_n}\Dout_{max},
}
where $\Dout_{max}$ is the maximum out-degree in the bi-degree sequence. In this case, the expression in \eqref{for:CDM:reuseV:2} is $o(1)$ when $s= o(\sqrt{n/\Dout_{max}})$. Further, $\Dout_{max}$ under Condition \ref{cond:bideg:regular} is $o(n)$.

The two bounds in \eqref{for:CDM:reuseedges} and \eqref{for:CDM:reuseV:2} together holds for $s= s(n)$, with $s(n)\rightarrow \infty$  sufficiently slowly.
Since any finite tree $H$ is made by a finite number of edges $S$, we can take $n$ large enough such that $s(n)\geq S$. This implies (\ref{for:CDM:coupling:1}).
 Note that from (\ref{for:CDM:coupling:1}) it directly follows that
	$$
		\pr\left(U_{\leq k}(V_n) \cong \mathrm{GW}^{(n)}_{\leq k}\right) = 1-o(1).
	$$
Finally, since the distributions $p_n$ and $p_n^\star$ converge respectively to $p$ and $p^\star$ as defined in Condition \ref{cond:bideg:regular}, and \eqref{for:CDM:coupling:1} holds for any finite marked rooted tree $(H,y,M(H))$, we have proved that $\mathrm{DCM}_n$ converges locally weakly in distribution to ${\cal P}$.
\end{proof}

Next we  prove the convergence in probability, using the second moment method on the number of vertices in $\mathrm{DCM}_n$ with a prescribed neighborhood $(H,y,M(H))$. 

\begin{Lemma}[Second moment method]
Fix $k\in\N$ and a finite structure $(H,y,M(H))$ for the root neighborhood. Let $N_k(H,y,M(H))$ be the number of vertices $i$ in $\mathrm{DCM}_n$ such that $U_{\leq k}(i) \cong (H,y,M(H))$. Then, as $n\rightarrow\infty$, 
\eqn{
\label{for:DCM:mom1}
	\frac{1}{n^2}\E\left[N_k(H,y,M(H))^2\right] \longrightarrow \P\left(U_{\leq k}(\emp)\cong (H,y,M(H))\right)^2.
}
\end{Lemma}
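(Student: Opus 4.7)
The plan is to rewrite the second moment as a two-vertex probability and extend the coupling argument of Lemma~\ref{lem:DCM:coupling} to a simultaneous exploration from two vertices. Write
\[
    N_k(H,y,M(H)) = \sum_{i\in[n]}\I\{U_{\leq k}(i)\cong (H,y,M(H))\},
\]
so that by expanding the square and averaging,
\[
    \frac{1}{n^2}\E\left[N_k(H,y,M(H))^2\right] = \pr\Big(U_{\leq k}(V_n^{\sss(1)}) \cong (H,y,M(H)),\ U_{\leq k}(V_n^{\sss(2)}) \cong (H,y,M(H))\Big),
\]
where $V_n^{\sss(1)}$ and $V_n^{\sss(2)}$ are independent uniform vertices in $[n]$ (with the randomness including the graph). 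The diagonal contribution corresponds to $V_n^{\sss(1)} = V_n^{\sss(2)}$, which happens with probability $1/n$ and so contributes at most $1/n=o(1)$. Hence it suffices to analyze the event that the two neighborhoods are isomorphic to $(H,y,M(H))$ conditionally on $V_n^{\sss(1)}\neq V_n^{\sss(2)}$.

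Next, I would extend the coupling in Lemma~\ref{lem:DCM:coupling} by exploring the $k$-neighborhoods of $V_n^{\sss(1)}$ and $V_n^{\sss(2)}$ simultaneously in breadth-first order, paired with two independent copies $\mathrm{GW}^{\sss(n),1}_{\leq k}$ and $\mathrm{GW}^{\sss(n),2}_{\leq k}$ of the $n$-dependent marked Galton-Watson tree of that lemma. Since $V_n^{\sss(1)}\neq V_n^{\sss(2)}$, the joint law of the two root pairs $(\Dout_{V_n^{\sss(1)}},\Din_{V_n^{\sss(1)}})$, $(\Dout_{V_n^{\sss(2)}},\Din_{V_n^{\sss(2)}})$ is that of two samples from $p_n$, drawn without replacement; total variation to two independent $p_n$-samples is $O(1/n)$. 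Each subsequent in-half-edge met during the exploration is paired with a uniformly chosen unused out-half-edge in $\mathrm{DCM}_n$, and with an independent fresh $p_n^\star$-distributed vertex on the branching-process side. The coupling fails only if, during the combined exploration, either (a) a previously used outgoing half-edge is selected, or (b) a previously encountered vertex (from either exploration) is selected again.

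Since $(H,y,M(H))$ is finite, the total number $T$ of half-edges paired during both explorations up to depth $k$ is bounded by a constant (depending only on $H$) on the event of success, so the same estimates as in the proof of Lemma~\ref{lem:DCM:coupling} apply: the probability of (a) is bounded by $T^2/(2L_n) = O(1/n)$ using \eqref{for:CDM:reuseedges}, and the probability of (b) is bounded by $T^2\Dout_{\max}/(2L_n) = o(1)$ by \eqref{for:CDM:reuseV:2} together with $\Dout_{\max} = o(n)$ under Condition~\ref{cond:bideg:regular}. Consequently,
\[
    \pr\Big(U_{\leq k}(V_n^{\sss(1)}) \cong (H,y,M(H)),\ U_{\leq k}(V_n^{\sss(2)}) \cong (H,y,M(H))\Big) = \pr\big(\mathrm{GW}^{\sss(n)}_{\leq k}\cong (H,y,M(H))\big)^2 + o(1).
\]
By Lemma~\ref{lem:DCM:coupling}, $\pr(\mathrm{GW}^{\sss(n)}_{\leq k}\cong (H,y,M(H)))\to \P(U_{\leq k}(\emp)\cong (H,y,M(H)))$, and squaring yields \eqref{for:DCM:mom1}. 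The main obstacle is the bookkeeping of the two-exploration coupling: one must track that every new out-half-edge drawn may collide either with a half-edge used in the same tree or in the other tree, so the failure bounds of Lemma~\ref{lem:DCM:coupling} need to be applied to the combined process with $T$ replaced by the sum of the two individual exploration lengths — but since both are $O(1)$ this only costs a constant factor and the estimate remains $o(1)$.
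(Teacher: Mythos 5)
Your proposal is correct, and it reaches the conclusion by a genuinely different mechanism than the paper. Both proofs begin identically, rewriting $\E[N_k^2]/n^2$ as the probability that two independent uniform roots $V_n^{\sss(1)},V_n^{\sss(2)}$ both have $k$-neighborhood isomorphic to $(H,y,M(H))$ and discarding the $O(1/n)$ diagonal/overlap contribution. From there the paper argues \emph{sequentially}: it conditions on the event $\{U_{\leq k}(V_n^{\sss(1)})\cong (H,y,M(H))\}$, observes that the conditional law of the remaining pairings is that of a modified configuration model $\widehat{\mathrm{DCM}}_n$ (the explored vertices removed, their unpaired half-edges absorbed into one auxiliary vertex $\widehat{j}$ that the second exploration must avoid), verifies that the modified bi-degree sequence still satisfies Condition \ref{cond:bideg:regular}, and then applies the one-root convergence to $\widehat{\mathrm{DCM}}_n$. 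You instead run the two explorations \emph{simultaneously}, coupled to two independent copies of the $n$-dependent Galton--Watson tree, and reuse the collision bounds \eqref{for:CDM:reuseedges} and \eqref{for:CDM:reuseV:2} from Lemma \ref{lem:DCM:coupling} applied to the combined exploration; asymptotic independence then falls out of the rarity of collisions rather than from an explicit conditional construction. Your route is shorter and avoids having to define and re-verify regularity for an auxiliary random graph; the paper's route makes the conditional-independence structure explicit and localizes the ``cost'' of the first neighborhood in the single vertex $\widehat{j}$. One small bookkeeping point you should make explicit: the event $V_n^{\sss(2)}\in U_{\leq k}(V_n^{\sss(1)})\setminus\{V_n^{\sss(1)}\}$ is not covered by your failure events (a) and (b), since the second root is not selected via a half-edge pairing; restricted to the event that the first exploration matches the finite structure $(H,y,M(H))$, this has probability at most $|V(H)|/n = O(1/n)$, so it is harmless, but it deserves a sentence (the paper handles the analogous point by inserting the condition $V_n^{\sss 2}\notin U_{\leq 2k}(V_n^{\sss 1})$ at the start).
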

\begin{proof}
We can rewrite 
$$
\E\left[N_k(H,y,M(H))^2\right]/n^2 = \pr\left(U_{\leq k}(V_n^1)\cong(H,y,M(H)), U_{\leq k}(V_n^2)\cong (H,y,M(H))\right),
$$
where $V_n^1$ and $V_n^2$ are two vertices chosen uniformly at random in $\mathrm{DCM}_n$. Since we fix $k\in\N$, we can take $n$ large enough such that, with high probability, $V^2_n$ is not a vertex found in the exploration up to distance $2k$ from $V^1_n$. Then we can rewrite the probability in the right-hand side of \eqref{for:DCM:mom1} as 
$$
	\pr\left(U_{\leq k}(V_n^1)\cong(H,y,M(H)), U_{\leq k}(V_n^2)\cong(H,y,M(H)), V^2_n\not\in U_{\leq 2k}(V_n^1) \right)+ o(1),
$$
where the factor $2k$ comes from the fact that we look at the structure $(H,y,M(H))$ for the two neighborhoods when they are disjoint.
With a similar argument to the one just used, since $k$ is fixed,
\eqn{
\label{for:DCM:mom2}
	\pr\left(U_{\leq k}(V_n^1)\cong(H,y,M(H)),  V^2_n\not\in U_{\leq 2k}(V_n^1) \right)\longrightarrow \P\left(U_{\leq k}(\emp)\cong (H,y,M(H))\right).
}
We now use the fact that, conditioning on the existence of a tree in $\mathrm{DCM}_n$, the probability to have a second tree disjoint from the first one is equal to have a tree in a different configuration model with different size and bi-degree distribution. 
More precisely, conditioning on $\{U_{\leq k}(V_n^1)\cong(H,y,M(H)),  V^2_n\not\in U_{\leq 2k}(V_n^1)\}$,  we want to evaluate the probability of having a second tree $U_{\leq k}(V_n^2)\cong(H,y,M(H))$, disjoint from $U_{\leq k}(V_n^1)\cong(H,y,M(H))$. We have that 
\eqn{
\label{for:DCM:mom3}
\begin{split}
&\pr\left(\left. U_{\leq k}(V_n^2)\cong(H,y,M(H))\right| U_{\leq k}(V_n^1)\cong(H,y,M(H)), V^2_n\not\in U_{\leq 2k}(V_n^1) \right)\\
&=\pr\left( \widehat{U}_{\leq k}(\widehat{V}_n^2)\cong(H,y,M(H)),\ \widehat{j}\not\in\widehat{U}_{\leq k}(\widehat{V}_n^2)\right),
\end{split}
}
where $\widehat{U}_{\leq k}(\widehat{V}_n^2)$ is the $k$-neighborhood of a vertex $\widehat{V}_n^2$ chosen uniformly at random in a different configuration model $\widehat{\mathrm{DCM}}_n$, and $\widehat{j}$ is a particular vertex in $\widehat{\mathrm{DCM}}_n$ whose characteristics are specified below. 

The vertices set and bi-degree sequence of $\widehat{\mathrm{DCM}}_n$ are defined as follows:
\begin{enumerate}
	\item if $i\not\in U_{\leq k}(V_n^1)$, then $i$ is a vertex in  $\widehat{\mathrm{DCM}}_n$ with the same in- and out-degree $(\Dout_i,\Din_i)$;
	\item if $i\in U_{\leq k}(V_n^1)$, then $i$ is not present in $\widehat{\mathrm{DCM}}_n$;
	\item define an additional vertex $\widehat{j}$ in $\widehat{\mathrm{DCM}}_n$, with in- and out-degree $(\widehat{\Dout}_j,\widehat{\Din}_j)$, where $\widehat{\Dout}_j$ equals the sum of the unpaired outgoing half-edges in $ U_{\leq k}(V_n^1)$, and $\widehat{\Din}_j$ equals the number of unpaired ingoing half-edges in $ U_{\leq k}(V_n^1)$. We point out that $\widehat{U}_{\leq k}(\widehat{V}_n^2)$ needs to avoid $\widehat{j}$. 
\end{enumerate}
Notice that the unpaired incoming half-edges in $ U_{\leq k}(V_n^1)$ are incident only to vertices at distance $k$ from the root, while the unpaired outgoing half-edges are incident to all vertices in $ U_{\leq k}(V_n^1)$. We have that $\widehat{\mathrm{DCM}}_n$ is a graph with $n-|U_{\leq k}(V_n^1)|+1$ vertices, and a different bi-degree sequence. 

The graph $\widehat{\mathrm{DCM}}_n$ is then created by pairing an incoming half-edge to a uniformly chosen outgoing half-edge, as usual as in the regular $\mathrm{DCM}_n$. The probability to observe a structure in $\widehat{\mathrm{DCM}}_n$ that is disjoint from the vertex $\widehat{j}$ is exactly the same as in the regular $\mathrm{DCM}_n$, conditioning on the structure of $ U_{\leq k}(V_n^1)$. This explain the equality in \eqref{for:DCM:mom3}. 

It is immediate to verify that the  bi-degree sequence of $\widehat{\mathrm{DCM}}_n$ satisfies Condition \ref{cond:bideg:regular}, since we modify a negligible fraction of vertices (recall that $k$ is fixed). As a consequence, 
\eqn{
\label{for:DCM:mom4}
	\pr\left( \widehat{U}_{\leq k}(\widehat{V}_n^2)\cong(H,y,M(H))\right)\longrightarrow  \P\left(U_{\leq k}(\emp)\cong (H,y,M(H))\right).
}
Using together \eqref{for:DCM:mom2} and \eqref{for:DCM:mom4}, we complete the proof of \eqref{for:DCM:mom1}.
\end{proof}

\paragraph{DCM with independent in- and out-degrees.} In \cite{Litvak3} the limiting distribution of PageRank in DCM has been obtained when the size-biased in- and out-degrees are independent:
\[
p^\star(h,l) = \frac{h}{\E[\DDout]}\pr\left(\D^{\star{\sss(out)}}=h\right)\pr\left(\DDin=l\right).
\]
Notice that $\DDout$ and $\DDin$ can, in general, be dependent, that is, $\DDin$ may have a different distribution conditioned on the event $\{\DDout\neq0\}$, because the vertices with zero out-degrees do not contribute in PageRank of other vertices. 

The local weak convergence for this case follows from \cite[Lemma 5.4]{Litvak3}, hence, our Theorem~\ref{th:pagerank} provides an alternative argument for the existence of the limiting PageRank distribution. 
It has been proved in \cite{Litvak3}, under some technical assumptions, that in the limit the PageRank is distributed as
\eqn{
\label{for:DCM:indep1}
	\mathcal{R} \stackrel{d}{=} \sum_{i=1}^{\mathcal{N}}\frac{c}{\D^{\star{\sss(\mathrm{out})}}_i}\mathcal{R}^\star_i+(1-c),
}
where $\mathcal{R^\star}$ are independent realizations of the endogenous solution of the stochastic fixed-point equation 
\eqn{
\label{for:DCM:indep2}
	\mathcal{R^\star} \stackrel{d}{=} \sum_{i=1}^{\mathcal{N^\star}}\frac{c}{\D^{\star{\sss(\mathrm{out})}}_i}\mathcal{R}^\star_i+(1-c).	
}
The recursion (\ref{for:DCM:indep2}) has been studied in a number of papers, see \cite{Jelenkovic2010WBP,Litvak4}, and further references in \cite{Litvak3}. The argument in \cite{Litvak3} is more general, in fact the authors consider generalized PageRank as solution of a more general equation than \eqref{for:DCM:indep2}, where the $(1-c)$ is replaced by a random variable ${\cal B}$. In particular, if $\DDin$ is regularly varying with a tail heavier than the tail of ${\cal B}$, then the limiting PageRank ${\cal R}$  follows a power law with the same exponent as the in-degree $\DDin$.

\subsection{Inhomogeneous random graphs}
\label{sec-ex-IRG} 
In the directed inhomogeneous random graphs, each vertex $i$ receives an in-weight $\Win_i$ and an out-weight $\Wout_i$. There is a directed edge from vertex $i$ to vertex $j$ with probability $w_{ij}^{(n)}$, which depends on $\Wout_i$ and $\Win_j$. Lee and Olvera-Cravioto~\cite{Lee2017PR-IRG} study PageRank in the class of inhomogeneous random graphs that satisfy the assumption
\[w_{ij}^{(n)}=\min\left\{1,\frac{\Wout_i\Win_j}{\theta\,n}(1+\phi_{ij}(n))\right\},\]
 where $\phi_{ij}(n)$ satisfies some technical conditions, and is in fact vanishing as $n\to\infty$ for most natural models. This formulation includes Erd\H{o}s-R\'enyi model, the Chung-Lu model, the Poissonian
random graph and the generalized random graph. For a detailed analysis of the properties of such directed graphs we refer to~\cite{Cao2017IGR}.

LWC for this class of graphs follows directly from \cite[Theorem 3.6]{Lee2017PR-IRG} under general conditions, including that the in- and out-weights are allowed to be dependent. Hence, our results imply that PageRank converges in this model as well, to the PageRank of the limiting random graph. 

In the case when the in- and out-weights are asymptotically independent, it is proved in \cite{Lee2017PR-IRG} that the PageRank converges to the attracting endogenous solution of stochastic recursion (\ref{for:DCM:indep2}). In particular, a power-law distribution of in-weights implies the power-law distribution of PageRank.

\subsection{Directed CTBP trees}
\label{sec:ex:CTBP}
CTBPs are models that describe the evolution of a population composed by individuals that produce children according to i.i.d.\ birth processes. These models have been intensively studied in the literature \cite{athrBook,Jagers,Nerman}.
The convergence result is stated in Proposition \ref{prop:CTBP:limit} below, which requires some notation from CTBPs theory that we present now.

\begin{Definition}[Branching process]
\label{def-BP}
We define the {\em Ulam-Harris set} as
\eqn{
\label{for:Ulamset}
	\U = \bigcup_{n\in\N}\N^n, \quad \quad \mbox{where}\quad \N^0 := \{\emp\}.
}
Consider a birth process $\xi$. Then, the continuous-time branching process is described by
\eqn{
\label{space}
	(\Omega,\mathcal{A},\pr) = \prod_{x\in\U}(\Omega_x,\mathcal{A}_x,\pr_x),
}
where $(\Omega_x,\mathcal{A}_x,\pr_x)$ are probability spaces and $(\xi^x)_{x\in\U}$ are i.i.d.\ copies of $\xi$. For $x\in\N^n$ and $k\in\N$ we denote the $k$th child of $x$ by $xk\in\N^{n+1}$. More generally, for $x\in\N^n$ and $y\in\N^m$, we denote the $y$ descendant of $x$ by $xy$. We call the branching process the triplet $(\Omega,\mathcal{A},\pr)$ and the sequence of point processes $(\xi^x)_{x\in\U}$. We denote the branching process by $\sub{\xi}$.
\end{Definition}

The behavior of CTBPs is determined by properties of the birth process.
Consider a jump process $\xi$ on $\R^+$, i.e., an integer-valued random measure on $\R^+$.  Then we say that $\xi$ is {\em supercritical and Malthusian} when there exists $\alpha^*>0$ such that
\eqn{
\label{for:supercr}
	\E\left[\xi_{T_{\alpha^*}}\right]=1, \quad \quad \mu:= -\left.\frac{d}{d\alpha}\E\left[\xi_{T_{\alpha}}\right]\right|_{\alpha = \alpha^*}<+\infty,
}
where $T_{\alpha}$ is an exponential random variable with mean $1/\alpha$. The unique value $\alpha^*$ that satisfies \eqref{for:supercr} is called the {\em Malthusian parameter}.

An important class of functions of branching processes are {\em random characteristics}. 
A {\em random characteristic} is a real-valued process $\Phi\colon \Omega\times\R\rightarrow\R$ such that, for $x\in\U$, $\Phi(x,s)=0$ for any $s<0$, and $\Phi(x,s) = \Phi(s)$ is a deterministic bounded function for every $s\geq 0$ that only depends on $x$ through the birth time of the individual, its birth process as well as the birth processes of its children. 

Random characteristics are used to evaluate the number of individuals that at time $t\geq0$ satisfies a property. For instance, consider  $\Phi(t)= \I_{\R^+}(t)$ for $x\in\U$ and $t\geq0$, i.e., the characteristic that is equal to one whenever the individual is alive at time $t$. Then the branching process evaluated at time $t$ with the random characteristic $\I_{\R^+}(\cdot)$ is equal to the number of individual alive at time $t$. We denote the CTBP evaluated with a random characteristic $\Phi$ by $\sub{\xi}^\Phi_t$.

It is known that, for a random characteristic $\Phi$, as $t\rightarrow\infty$,
\eqn{
\label{th-expogrowth-f2}
	\frac{\sub{\xi}^{\Phi}_t}{\sub{\xi}^{\I_{\R^+}}_t}
		\stackrel{\pr-\mbox{a.s.}}{\longrightarrow}
			\E\left[\Phi(T_{\alpha^*})\right],
}
where the left-hand term in \eqref{th-expogrowth-f2} is the fraction of alive individuals that satisfies the property given by $\Phi$. The right-hand side is the expectation of $\Phi$, evaluated at an exponentially distributed time $T_{\alpha^*}$, on an independent copy of the CTBP. 

The convergence in \eqref{th-expogrowth-f2} is a general result that is used often in the literature \cite{Athr2,GarvdHW,Jagers,Nerman,RudValko}. We refer to \cite[Theorem A]{RudValko} for a simplified formulation of the result contained in \cite{Nerman}.

With the notation just introduced we can formulate the convergence result:
\begin{Proposition}[LWC for CTBPs trees]
\label{prop:CTBP:limit}
Consider a supercritical and Malthusian birth process $(\xi_t)_{t\geq0}$. Denote the corresponding CTBP by $\sub{\xi}$. Let $\T(t)$ be the directed random tree defined by $\sub{\xi}$ at time $t$, where edges are directed from children to parents. Then, on the event $\{|\T(t)|\rightarrow\infty\}$, $\T(t)$ {\em converges $\pr$-a.s. in the LW sense} to the law of $\T(T_{\alpha^*})$, where
\begin{enumerate}
	\item all marks are 1;
	\item edges are directed from children to parents;
	\item $T_{\alpha^*}$ is an exponentially distributed random variable with parameter $\alpha^*$ (the Mathusian parameter of the CTBP).
\end{enumerate} 
\end{Proposition}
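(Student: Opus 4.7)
The plan is to verify the criterion of Remark \ref{rem:crit:Dlwc}: for every $k\in\N$ and every finite marked rooted directed tree $(H,y,M(H))$, show that on the survival event $\{|\T(t)|\to\infty\}$,
\begin{equation*}
\frac{1}{|\T(t)|}\sum_{v\in \T(t)}\I\{U_{\leq k}(v)\cong (H,y,M(H))\} \stackrel{\pr-a.s.}{\longrightarrow} \pr\!\left(U_{\leq k}(\emp)\cong (H,y,M(H))\text{ in }\T(T_{\alpha^*})\right).
\end{equation*}
Because edges in $\T(t)$ point from children to parents, the directed exploration of Definition \ref{def:rootneigh:dir} traverses each edge from parent to child, and so $U_{\leq k}(v)$ coincides with the subtree of descendants of $v$ of graph-depth $\leq k$ already born by time $t$. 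Endowing every vertex of every CTBP tree with mark $1$ is consistent with Definition \ref{def:rootmarked}: every non-ancestor vertex has $\dout=1$, and the unique ancestor contributes negligibly to the empirical average on the survival event.

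The key step is to encode the $k$-neighborhood shape as a bounded random characteristic. Define
\begin{equation*}
\Phi_k(x,s)=\I\{\text{descendants of }x\text{ of depth}\leq k\text{ born within time }s\text{ of }\sigma_x,\text{ with marks }1,\ \cong (H,y,M(H))\}.
\end{equation*}
Since $\Phi_k$ depends only on the sub-branching-process rooted at $x$ and $0\leq\Phi_k\leq 1$, it is a valid bounded random characteristic. By construction,
\begin{equation*}
\sub{\xi}^{\Phi_k}_t=\sum_{v\in \T(t)}\I\{U_{\leq k}(v)\cong (H,y,M(H))\},\qquad \sub{\xi}^{\I_{\R^+}}_t=|\T(t)|.
\end{equation*}
Invoking \eqref{th-expogrowth-f2} on $\{|\T(t)|\to\infty\}$ then gives
\begin{equation*}
\frac{\sub{\xi}^{\Phi_k}_t}{|\T(t)|}\stackrel{\pr-a.s.}{\longrightarrow}\E[\Phi_k(T_{\alpha^*})]=\pr\!\left(U_{\leq k}(\emp)\cong (H,y,M(H))\text{ in }\T(T_{\alpha^*})\right),
\end{equation*}
the last equality holding because $\Phi_k$ evaluated at an independent exponential time on an independent CTBP is exactly the indicator of the target isomorphism at the ancestor of that CTBP.

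The mark-$1$ convention at the root of $\T(T_{\alpha^*})$ reflects the observation of Section \ref{sec:LWCintro} that the limit mark of the root may strictly exceed its out-degree: a typical vertex of $\T(t)$ carries one outgoing edge to its parent, and in the limit this parent lies outside the explorable incoming neighborhood, leaving a single ghost outgoing edge attached to the limiting root. The main technical obstacle is to check the integrability and measurability hypotheses of the Nerman--Jagers strong law underlying \eqref{th-expogrowth-f2} (see \cite{Nerman} and \cite[Theorem~A]{RudValko}). Boundedness of $\Phi_k$ by $1$ trivializes all moment conditions, and measurability of $\Phi_k(\cdot,s)$ with respect to the natural filtration of the subtree rooted at $x$ is immediate from its definition, so these hypotheses are satisfied with room to spare. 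Combining the displays above with the criterion of Remark \ref{rem:crit:Dlwc} then yields almost sure local weak convergence of $\T(t)$ to the law of $\T(T_{\alpha^*})$ as claimed.
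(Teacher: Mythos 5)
Your proposal is correct and follows essentially the same route as the paper: both arguments identify the directed $k$-neighborhood of a vertex with its depth-$\leq k$ descendant subtree, encode the indicator of a fixed neighborhood shape as a bounded random characteristic, and conclude via the Nerman--Jagers strong law \eqref{th-expogrowth-f2}, with the limit read off as the corresponding probability for the root of $\T(T_{\alpha^*})$. Your write-up is somewhat more explicit about the hypotheses of the strong law and the mark-$1$ convention, but the substance is identical.
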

\begin{proof}
First of all, at every $t\in\R^+$, $\T(t)$ is a directed finite tree. We can equivalently prove the result on the discrete sequence $(\T_n)_{n\in\N}$, where $\T_n = \T(\tau_n)$, for $(\tau_n)_{n\in\N}$ the sequence of birth times of the CTBP.

Denote the vertices in $\T_n$ by their birth order, which means that the root of $\T_n$ in the sense of CTBP is vertex $1$. First of all, notice that, for every $i\in[n]$ and $N\in\N$, the $N$ neighborhood $U_{\leq N}(i)$ in the directed marked rooted graphs $(\T_n,i,1)$  is just the subtree rooted at $i$ composed by the descendents of $i$ only up to generation $N$ (from $i$). Notice that every vertex has out-degree $1$ except for vertex $1$ since it has out-degree 0. 

What we need to prove is that, for any finite directed rooted tree $(H,y)$ of depth $N$ and with mark 1 for every vertex, we have, as $n\rightarrow\infty$, 
\eqn{
	\frac{1}{n}\sum_{i\in[n]}\I\left\{ U_{\leq N}(i) \cong (H,y) \right\} \stackrel{\pr-a.s.}{\longrightarrow} \pr\left(U_{\leq N}(\emp) \cong (H,y)\right),
}
where $U_{\leq N}(\emp)$ is the $N$-neighborhood of the root $\emp$ in the random tree $\T(T_{\alpha^*})$. For every $i\in[n]$ the indicator function inside the expectation satisfies the definition of random characteristic, since it is a bounded function that, for every individual $i$ in the branching population, depends only on the birth time $\tau_i$ and on the randomness associated to $i$ and its descendants. As a consequence, the result follows by \eqref{th-expogrowth-f2}.
\end{proof}

This result resembles the subtree counting result in \cite[Theorem 2]{RudValko}.
Notice that the limiting rooted graph in Proposition \ref{prop:CTBP:limit} is finite with probability 1. This is rather different than the undirected settings, where typically the limiting rooted graph is infinite when considering a sequence of graphs with growing size.

\begin{Remark}[{\bfseries Non-recursive property of PageRank}]
\label{rem:nonrecursive:CTBP}
{\em 
the behavior of PageRank is often investigated starting from the recursive distributional equation in \eqref{for:recursive:pagerank:pers:norm}. In particular, the solution of  \eqref{for:recursive:pagerank:pers:norm} is constructed using a weighted Galton-Watson tree. This construction is based on the fact that the subtree rooted at every vertex is again a Galton-Watson tree with the same distribution.

In some cases, the construction is adapted to allow the root to have different degree and mark, but all other vertices have i.i.d.\ characteristics. As an example, we refer to \cite{Litvak3}, where PageRank on directed configuration model is investigated (in the independent case, see Section \ref{sec:ex:DCM}). 

When we consider CTBPs, we have proved that the graph-normalized PageRank converges to the PageRank value of the root in a tree with distribution $\T(T_{\alpha^*})$. In particular, the processes $\{(\xi_t)_{t\geq0}^x\}_{x\in\U}$ that define $\T(T_{\alpha^*})$ are i.i.d., but they are evaluated at random dependent times $(T_{\alpha^*}-\tau_x)_{x\in\U}$. Thus, the solution based on a weighted Galton-Watson tree does not apply to the PageRank in CTBPs, as the CTBP is inhomogeneous.
}
\end{Remark}

\subsection{Preferential attachment model}
\label{sec:examples:PAM}
Preferential attachment models (PAMs) are discrete-time dynamical models of random graphs. The main idea behind these models is the following: conditioning on the actual state of the graph, a new vertex is added with one (or more) edges, that are attached to existing vertices with probabilities proportional to their degree plus a constant.

There are different  possible definitions of the model. See \cite{BergerBorgs,DSMCM} as well as \cite[Chapter 8]{vdH1} for different definitions of the model. We consider a modification of the {\em sequential} PAM as presented in \cite{BergerBorgs}. Fix $m\geq 1$ to be the initial degree of the vertices, and a constant $\delta>-m$. Then, we define a sequence of graphs $(\PA_n(m,\delta))_{n\in\N}$ as follows:
\begin{enumerate}
	\item for $n=1$, $\PA_1(m,\delta)$ is composed by a single vertex with no edges;
	\item for $n=2$, $\PA_2(m,\delta)$ is composed by two vertices with $m$ edges between them;
	\item for $n\geq 3$,  $\PA_n(m,\delta)$ is defined recursively: we add a vertex to $\PA_{n-1}(m,\delta)$ with $m$ edges. These $m$ edges are attached to existing vertices with the following probability: for $l=1,\ldots, m$,
		\eqn{
	\label{for:PA:rule}
		\pr\left(\left.n\stackrel{l}{\rightarrow}i \right| \PA_{n-1,l-1}(m,\delta)\right) = 
\frac{D_i(n-1,l-1)+\delta}{2m(n-2)+ (n-1)\delta+(l-1)}.
	}
	In \eqref{for:PA:rule}, $D_i(n-1,l-1)$ denotes the degree of vertex $i$ in the graph of size $n-1$ and after the first $l-1$ edges of the new vertex have been attached. 
\end{enumerate}
Notice that we allow for multiple edges but not for  self-loops.
In this case we talk about PAM with {\em affine attachment rule}, since the attachment probabilities are proportional to an affine function of the degree. This model was first introduced in \cite{ABrB} for $\delta=0$.  PAMs have gained a lot of attention in the last years since they show properties found in many real-world networks. In fact, PAMs shows a power-law degree distribution with exponent $\tau = 3+\delta/m$ \cite[Section 8.4]{vdH1}, and they shows the {\em small-world phenomenon}, i.e., the typical distance and the diameter of the graph are small compared to the size of the graph itself 
\cite{CarGarHof,DSMCM,DSvdH}.

It is known that CTBPs can embedd PAMs in continuous-time \cite{Athr,Athr2,RudValko}. We give a definition of the birth process that describes PAMs:
\begin{Definition}[Embedding birth process]
\label{def-emb_birthpr}
Fix $m\geq 2$ and $\delta>-m$. Consider the sequence $(k+1+\delta/m)_{k\in\N}$. Let $(E_k)_{k\in\N}$ be a sequence of independent and exponentially distributed random variables, with $E_k \sim E(k+1+\delta/m)$, and $E_{-1}=0$. We call $(\xi_t)_{t\geq0}$ the {\em embedding birth process}, where $\xi_t = k$ if $t\in[E_{-1}+\cdots+E_{k-1},E_{-1}+\cdots+E_k)$.
\end{Definition}

This construction is already used in \cite{Athr,Athr2,GarvdH2,RudValko}. The embedding holds for any $m\geq 2$, but the topological description of the graph as a CTBP is used only in \cite{GarvdH2}. 

Originally defined as undirected graphs, PAMs have a natural direction from edges given by the recursive definition of such models. We can see every edge as directed from young to old, therefore every vertex in $\PA_t(m,\delta)$ has out-degree $m$. If we see the CTBP defined by the process in \eqref{def-emb_birthpr} as the continuous-time version of the PAM with out-degree 1, then the directed local weak limit is given by Proposition \ref{prop:CTBP:limit}. 

For $m\geq 2$, PAM is no longer a tree, making the analysis harder than the tree case.
In \cite{BergerBorgs}, Berger, Borgs, Chayes and Saberi give the local weak limit in probability for the undirected version of PAM with affine attachment rule when $\delta \geq 0$. 
 When $\delta\in(-m,0)$, we believe that the result holds by adapting the proof in \cite{BergerBorgs}. This argument is left for future work. We give a definition of the limiting graph for DPAMs:

\begin{Definition}[Directed P\'olya point graph]
\label{def:pointgraph:dir}
The {\em directed P\'olya point graph} is an infinite marked rooted random tree constructed as follows: let $m\geq 2$ and $\delta>-m$ be parameters for a preferential attchment model $(\PA_t(m,\delta))_{t\in\N}$. Let
\begin{enumerate}
	\item[{(a)}]  $\chi = (m+\delta)/(2m+\delta)$, $\psi=(1-\chi)/\chi$;
	\item[{(b)}]$\Gamma_{in}$ denote a Gamma distribution with parameters $m+\delta$ and 1;
\end{enumerate}
Vertices in the graph have three characteristics:
\begin{enumerate}
	\item[{(a)}] a {\em label} $i$ in the Ulam-Harris set;
	\item[{(b)}] a {\em position} $x\in[0,1]$;
	\item[{(c)}] a positive number $\gamma$ called {\em strength};
\end{enumerate}
In addition, every vertex has mark $m$ (in the sense of Definition \ref{def:rootneigh:dir}). Assign to $\emp$ a position $x_\emp= U^\chi$, where $U$ is a uniform random variable on $[0,1]$, and a strength $\gamma_\emp\sim \Gamma_{in}$. Set $\emp$ as unexplored. 
Then, recursively over the elements in the set of unexplored vertices, according to the shortlex order:
\begin{enumerate}
	\item let $i$ denote the current unexplored vertex;
	\item assign to $i$ a strength value $\gamma_i \sim \Gamma_{in}$;
	\item let $u_{i1},\ldots,u_{i \Din_i}$ be the random $\Din_i$ points given by an independent Poisson process on $[u_i,1]$ with density
	$$
		 \rho_{i}(x) = \gamma_i\frac{\psi x^{\psi-1}}{x_i^\psi}.
	$$
	\item draw an edge from each one of the vertices $i1,\ldots,i\Din_i$ to $i$;
	\item set $x_{i1},\ldots,x_{i \Din_i}$ unexplored and $i$ as explored.
\end{enumerate}
\end{Definition}
Definition \ref{def:pointgraph:dir} is obtained by the definition of the undirected LW limit of PAM given in \cite[Section 2.3.2]{BergerBorgs}, where the exploration of the neighborhood of a vertex is limited to the exploration of {\em younger vertices}. In other words, the exploration from a vertex $i$ is made only over vertices with index $j>i$. 
The positions in Definition \ref{def:pointgraph:dir} encode the {\em age} of a vertex in PAM. In fact, it is possible to identify a vertex $i\in[t]$ in PAM with the point $(i/t)^\chi$ \cite[Lemma 3.1]{BergerBorgs}, so old vertices have position closer to 0 than young vertices. 

With the definition of the Directed P\'olya point graph, we can state the directed LWC result for PAMs:

\begin{Proposition}[LW limit of directed PAM]
\label{prop:PAM:dirLW}
Fix $m\geq 2$ and $\delta>-m$. Let $(\mathrm{PA}_t(m,\delta))_{t\in\N}$ be a PAM defined by the attachment rule in \eqref{for:PA:rule}. Denote by $(\mathrm{DPA}_t(m,\delta))_{t\in\N}$ the directed version of $(\mathrm{PA}_t(m,\delta))_{t\in\N}$, where edges are directed from young  to old vertices. Then,
\begin{enumerate}
	\item for $\delta\geq0$,  $\mathrm{DPA}_t(m,\delta)$ {\em converges in probability in the directed LW sense} to the directed P\'olya point graph as in Definition \ref{def:pointgraph:dir};
	\item for $\delta\in(-m,0)$, if \cite[Theorem 2.2]{BergerBorgs} can be extended, then the convergence holds also in this case.
	\end{enumerate}
\end{Proposition}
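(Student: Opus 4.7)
The plan is to derive the directed LW convergence from the undirected local weak convergence of the PA model proved by Berger--Borgs--Chayes--Saberi \cite{BergerBorgs}. First I would observe that every vertex in $\mathrm{DPA}_t(m,\delta)$ has out-degree exactly $m$ from the moment of its birth, so all marks in the limiting graph must equal $m$; incoming edges to a vertex $i$ come precisely from vertices $j > i$ that attached one of their $m$ edges to $i$. Consequently, the directed incoming $k$-neighborhood $U_{\leq k}(i)$ in $\mathrm{DPA}_t(m,\delta)$ is obtained by iteratively collecting, starting from $i$, all younger vertices that attached to some vertex already discovered. A union-bound estimate shows this subgraph is a tree with probability $1 - o(1)$ for fixed $k$, since two distinct young vertices producing a cycle of bounded length is a rare event under the attachment rule \eqref{for:PA:rule}.

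Next I would compare with Definition \ref{def:pointgraph:dir}: the directed P\'olya point graph is precisely the \emph{younger-neighbors only} sub-exploration of the undirected P\'olya point graph of BBCS. BBCS's recursive description has, at each vertex $i$ of position $x_i$, two independent inhomogeneous Poisson processes, one placing ``older'' neighbors in $[0, x_i]$ and one placing ``younger'' neighbors in $[x_i, 1]$; the latter has density $\gamma_i \psi x^{\psi-1}/x_i^{\psi}$, which matches Definition \ref{def:pointgraph:dir} verbatim. Hence, restricting the BBCS exploration to edges oriented young-to-old reproduces exactly the directed P\'olya point graph, and the convention that marks equal $m$ corresponds to the out-edges of each vertex being truncated.

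For $\delta \geq 0$, I would then invoke \cite[Theorem 2.2]{BergerBorgs} together with the criterion of Remark \ref{rem:crit:Dlwc}. For any finite marked directed rooted tree $(H, y, M(H))$ with all marks equal to $m$, the event $\{U_{\leq k}(i) \cong (H, y, M(H))\}$ is a deterministic function of the undirected $k$-neighborhood of $i$ in $\mathrm{PA}_t(m,\delta)$ together with the birth-time order; therefore the in-probability convergence for indicator functions transfers from the undirected to the directed setting, yielding
$$
\frac{1}{n}\sum_{i\in[n]} \I\{U_{\leq k}(i) \cong (H, y, M(H))\} \stackrel{\pr}{\longrightarrow} \P\bigl(U_{\leq k}(\emp) \cong (H, y, M(H))\bigr),
$$
with the right-hand side evaluated on the directed P\'olya point graph. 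Since Theorem \ref{th:crit:rand} (extended to directed graphs by Remark \ref{rem:crit:Dlwc}) requires only such indicator convergences, this proves part (1).

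The hard part will be the extension to $\delta \in (-m, 0)$. BBCS work with a P\'olya-urn representation in which vertex $i$ receives an independent Beta-distributed variable $\phi_i$ whose shape parameters involve $m + \delta$; while $m + \delta > 0$ still keeps the urn well-defined, several of their quantitative estimates (concentration of partial sums of $\phi_i$, tightness of the exploration process, and control of the total-variation error between the PA dynamics and the urn model) use $\delta \geq 0$ to obtain clean exponents and moment bounds. I would attempt to push these through with the smaller fluctuation exponent $\chi = (m+\delta)/(2m+\delta)$ and verify that the coupling of \cite[Theorem 2.1]{BergerBorgs} remains valid for negative $\delta$; once this extension is in place, the reduction to the directed case described above applies verbatim, which is the content of part (2).
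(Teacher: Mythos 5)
Your proposal is correct and follows essentially the same route as the paper: the paper's (very terse) proof likewise derives part (1) directly from \cite[Theorem 2.2]{BergerBorgs} together with the observation that the directed exploration in $\mathrm{DPA}_t$ corresponds to exploring only younger vertices, with part (2) left conditional on extending the BBCS result to $\delta\in(-m,0)$. Your write-up simply fills in the details (constant out-degree $m$ giving the marks, the identification of the directed P\'olya point graph as the younger-neighbors restriction, and the transfer of the indicator-function convergence via Remark \ref{rem:crit:Dlwc}) that the paper leaves implicit.
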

The proof of Proposition \ref{prop:PAM:dirLW} follows immediately from \cite[Theorem 2.2]{BergerBorgs} and the fact that the exploration process in  $\mathrm{DPA}_t$ corresponds to exploring only younger vertices.

\begin{Remark}[{\bfseries Non-recursive property of PageRank}]
{\em 
Similarly to Remark \ref{rem:nonrecursive:CTBP} about CTBPs, we point out that the PageRank value of the root of a directed P\'olya point graph does not satisfy the recursive property that is necessary to consider it as a  solution of \eqref{for:recursive:pagerank:pers:norm}. Notice that the Poisson point process assigned to vertex $i$ in Definition \ref{def:pointgraph:dir} is defined on the interval $[x_i,1]$, where the position $x_i$ depends on the ancestors (in the Ulam-Harris sense) of $i$. 

Another way to interpret this is that the family of Poisson point process in Definition \ref{def:pointgraph:dir} is composed by i.i.d.\ processes parametrized by the positions of vertices, that are dependent random variables. This suggests that the positions in the P\'olya point graph play the same role as the birth times in CTBPs.
}
\end{Remark}

\bigskip

\noindent
{\bfseries Acknowledgments.}
This work is supported in part by the Netherlands Organisation for Scientific Research (NWO) through the Gravitation {\sc Networks} grant 024.002.003. The work of RvdH is further supported by the Netherlands Organisation for Scientific Research (NWO) through VICI grant 639.033.806.


\printbibliography[title=References, heading = bibintoc]

\end{document}